\newcommand{\floor}[1]{\left\lfloor #1 \right\rfloor}
\newcommand{\ceil}[1]{\left\lceil#1 \right\rceil}
\renewcommand{\hom}{\mathrm{Hom}}
\newtheorem{lemma}{Lemma}[section]
\newtheorem{corollary}[lemma]{Corollary}
\newtheorem{theorem}[lemma]{Theorem}
\newtheorem{proposition}[lemma]{Proposition}
\theoremstyle{definition}
\newtheorem{definition}[lemma]{Definition}
\newtheorem*{remark}{Remark}
\newtheorem{example}[lemma]{Example}
\begin{document}

\renewcommand{\thesubsection}{\arabic{subsection}}
\title{Torsion exponents in stable homotopy and the Hurewicz homomorphism}
\author{Akhil Mathew}
\date{\today}
\email{amathew@math.berkeley.edu}
\address{University of California, Berkeley CA 94720}

\begin{abstract} We give estimates for the torsion in the Postnikov sections
$\tau_{[1, n]} S^0$ of
the sphere spectrum, and show that the $p$-localization is annihilated by
$p^{n/(2p-2) + O(1)}$. 
This leads to explicit bounds on the exponents of the kernel and cokernel of the
Hurewicz map $\pi_*(X) \to H_*(X; \mathbb{Z})$ for a connective spectrum $X$.
Such bounds were first considered by Arlettaz, although our estimates are
tighter and we prove that they are the best possible up to a constant factor. 
 As applications, we sharpen existing bounds on the orders of
$k$-invariants in a connective spectrum,  sharpen bounds on the unstable
Hurewicz map of an infinite loop space, and prove an exponent theorem for the
equivariant stable stems. 
\end{abstract}

\maketitle

\section{Introduction}

Let $X$ be a spectrum. 
Then there is a natural map
(the Hurewicz map)
of graded abelian groups
\[ \pi_*(X) \to H_*(X; \mathbb{Z})  \]
which is an isomorphism rationally. In general, this is the best that one can
say. For instance, given an element 
$x \in \pi_n(X)$
annihilated by the Hurewicz map, we know that $x$ is torsion, but we cannot a
priori give an integer $m$ such that $mx = 0$. For example, if $K$ denotes
periodic complex $K$-theory, then $K/p^k$ has trivial homology for each $k$, but it
contains elements in homotopy of order $p^k$.

If, however, $X$ is connective, then 
one can do better. 
For instance, the Hurewicz theorem states in this case that the map $\pi_0(X)
\to H_0(X; \mathbb{Z})$ is an isomorphism. 
The map $\pi_1(X) \to H_1(X; \mathbb{Z})$ need not be an isomorphism, but it
is surjective and any
element in the kernel must be annihilated by 2. 
There is a formal argument that in
any degree, ``universal'' bounds must exist. 

\begin{proposition} 
\label{existbounds}
There exists a function $M\colon \mathbb{Z}_{\geq 0} \to \mathbb{Z}_{>0}$
with the following property: 
if $X$ is any connective spectrum, then the kernel and cokernel of the Hurewicz
map $\pi_n(X) \to H_n(X; \mathbb{Z})$ are annihilated by $M(n)$.
\end{proposition}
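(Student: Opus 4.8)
The plan is to realize the Hurewicz map as $\pi_*$ of a map of spectra built from the sphere, and then reduce to a bounded-torsion statement about a single spectrum that does not depend on $X$. For any spectrum $X$, the Hurewicz map $\pi_*(X) \to H_*(X;\mathbb{Z})$ is $\pi_*$ of the map $X \simeq X \wedge S^0 \to X \wedge H\mathbb{Z}$ induced by the truncation $S^0 \to \tau_{\leq 0}S^0 = H\mathbb{Z}$ (one of the standard descriptions, since $\pi_*(X \wedge H\mathbb{Z}) = H_*(X;\mathbb{Z})$). The fiber of $S^0 \to H\mathbb{Z}$ is the $1$-connective cover $\tau_{\geq 1}S^0$. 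Smashing the fiber sequence $\tau_{\geq 1}S^0 \to S^0 \to H\mathbb{Z}$ with $X$ and passing to the long exact sequence of homotopy groups, the kernel of $\pi_n(X) \to H_n(X;\mathbb{Z})$ is a quotient of $\pi_n(X \wedge \tau_{\geq 1}S^0)$, and its cokernel is a subgroup of $\pi_{n-1}(X \wedge \tau_{\geq 1}S^0)$. So it suffices to find, for each $m$, an integer that annihilates $\pi_m(X \wedge \tau_{\geq 1}S^0)$ for \emph{every} connective $X$.

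Next I would cut the sphere off at level $n$. The Postnikov truncation $\tau_{\geq 1}S^0 \to \tau_{[1,n]}S^0$ has fiber $\tau_{\geq n+1}S^0$, which is $(n+1)$-connective; since $X$ is connective, $X \wedge \tau_{\geq n+1}S^0$ is also $(n+1)$-connective, so the long exact sequence shows that $\pi_m(X \wedge \tau_{\geq 1}S^0) \to \pi_m(X \wedge \tau_{[1,n]}S^0)$ is injective for $m \leq n$. It therefore suffices to bound the torsion of $\pi_m(X \wedge Z)$ for $m \leq n$, where $Z := \tau_{[1,n]}S^0$. Now $Z$ has only finitely many nonzero homotopy groups --- the stable stems $\pi_1^s, \dots, \pi_n^s$, which are finite by Serre's finiteness theorem --- and I claim that $Z$ itself is annihilated by the integer $N_n := \prod_{k=1}^n \exp(\pi_k^s)$, in the sense that $N_n \cdot \mathrm{id}_Z$ is nullhomotopic. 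Granting the claim, $N_n \cdot \mathrm{id}_{X \wedge Z}$ is nullhomotopic for every $X$, so $N_n$ annihilates $\pi_m(X \wedge Z)$ for all $m$ and all $X$, which is more than enough.

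To prove the claim I would induct up the finite Postnikov tower of $Z$, whose $k$-th layer is $\Sigma^k H(\pi_k^s)$ for $k = 1, \dots, n$. For a finite abelian group $A$, multiplication by $\exp(A)$ is already the zero endomorphism of $A$, hence the zero endomorphism of $HA$ regarded as an object of $\mathrm{Mod}_{H\mathbb{Z}} \simeq D(\mathbb{Z})$, hence a nullhomotopic self-map of the spectrum $\Sigma^k HA$. For the inductive step I would use the elementary observation that in a stable category, if $Z' \to Z \to Z''$ is a cofiber sequence with $a \cdot \mathrm{id}_{Z'}$ and $b \cdot \mathrm{id}_{Z''}$ both nullhomotopic, then $ab \cdot \mathrm{id}_Z$ is nullhomotopic: the map $b \cdot \mathrm{id}_Z$ becomes null after composing to $Z''$, hence factors through the fiber $Z' \to Z$, and multiplying that factorization by $a$ kills it. Running this through the $n$ layers of the Postnikov tower of $Z$ yields $N_n \cdot \mathrm{id}_Z \simeq 0$. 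Combining everything, the proposition holds with $M(n) = \prod_{k=1}^n \exp(\pi_k^s)$ (and $M(0) = 1$, the empty product, matching the Hurewicz isomorphism $\pi_0(X) \xrightarrow{\sim} H_0(X;\mathbb{Z})$).

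I do not expect a real obstruction: this is the ``formal'' existence statement, and every step above is soft. The one point that genuinely uses the structure of the situation is that a spectrum with all homotopy groups finite need not be bounded-torsion; what makes the Postnikov induction terminate is precisely that $\tau_{[1,n]}S^0$ has only \emph{finitely many} nonzero homotopy groups, which is exactly what the truncation in the second step arranges. (Of course, the real work of the paper is to replace this crude bound $M(n) \approx \prod_{k \leq n}\exp(\pi_k^s)$ by an explicit estimate optimal up to a constant factor.)
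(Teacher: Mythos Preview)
Your argument is correct, but it is not the paper's proof of this proposition. The paper proves existence by contradiction: if no universal bound existed in degree $n$, choose connective spectra $X_i$ and classes $x_i \in \ker(\pi_n(X_i) \to H_n(X_i;\mathbb{Z}))$ of unbounded order, and form $X = \prod_i X_i$; since $H\mathbb{Z}$ has finitely many cells in each degree, homology commutes with arbitrary products of connective spectra, so $x = (x_i)$ lies in the Hurewicz kernel of $X$ yet is not torsion, a contradiction. This is shorter and completely soft---it never touches the Postnikov tower of $S^0$ or Serre finiteness---and the paper emphasizes it because the same trick proves universal bounds exist in other contexts (e.g., nilpotence exponents for $MU$-acyclic classes).

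Your route is instead the constructive one: reduce to bounding $\pi_m(X \wedge \tau_{[1,n]}S^0)$, then kill $\tau_{[1,n]}S^0$ by the product of the exponents of its layers. This is exactly the combination of what the paper later calls \Cref{Hurconnect} and \Cref{firstbound} (with \Cref{expcofiber} as the inductive step), and it recovers Arlettaz's bound $M(n) = \rho_1 \cdots \rho_n$ rather than mere existence. So you have essentially reproved the theorem attributed to Arlettaz just after this proposition, and anticipated the machinery the paper sets up in Sections~2 and~5. The trade-off is that your argument needs Serre finiteness as genuine input, whereas the paper's product argument needs only that Hurewicz-kernel classes are torsion.
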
 
\begin{proof} 
We consider the case of the kernel; the other case is similar. Suppose there
existed no such function. Then, there exists an integer $n$ and
connective spectra $X_1,
X_2, \dots $ together with elements $x_i \in \pi_n(X_i)$ for each $i$ such that:
\begin{enumerate}[(a)]
\item $x_i$ is  in the kernel of the Hurewicz map (and thus torsion).
\item The orders of the $x_i$ are unbounded.
\end{enumerate}
In this case, we can form a connective spectrum $X = \prod_{i = 1}^\infty X_i$.
Since homology commutes with arbitrary products \emph{for connective spectra},
as $H \mathbb{Z}$ can be given a cell decomposition with finitely many cells in
each degree (see \cite[Thm. 15.2, part III]{Adams}),
it follows that we obtain an element $x  = (x_i)_{i \geq 1}  \in \pi_n(X) =
\prod_{i \geq 1} \pi_n(X_i)$ which is annihilated by the Hurewicz map. However, $x$
cannot be torsion since the orders of the $x_i$ are unbounded.
\end{proof}

We note that the above argument is very general. 
For instance, it shows that the nilpotence theorem \cite{DHS} implies that there exists 
a universal function $P(n)\colon \mathbb{Z}_{\geq 0} \to \mathbb{Z}_{>0}$ such that
if $R$ is a connective ring spectrum and $x \in \pi_n(R)$ is annihilated by the
$MU$-Hurewicz map, then $x^{P(n)} =0$. The determination of the best
possible function $P(n)$
is closely related to the questions raised by Hopkins in \cite{Raveneltalk}. 

\Cref{existbounds} appears in \cite{AMoore}, where an upper bound for the universal 
function $M(n)$ is established (although the above argument may be older).
\begin{theorem}[{Arlettaz \cite[Thm. 4.1]{AMoore}}] If $X$ is any
connective spectrum, then the kernel of $\pi_n(X) \to H_n(X; \mathbb{Z})$ is
annihilated by  
$\rho_1 \dots \rho_n$ where $\rho_i$ is the smallest positive integer
that annihilates the torsion group
$\pi_i(S^0)$. 
The cokernel is annihilated by 
$\rho_1 \dots \rho_{n-1}$.
\end{theorem}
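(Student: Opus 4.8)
The plan is to realize the Hurewicz map as $\mathrm{id}_X$ smashed with the unit $S^0 \to H\mathbb{Z}$ and to control its fiber. Let $\overline{H\mathbb{Z}}$ denote the fiber of the unit map $S^0 \to H\mathbb{Z}$. A short long-exact-sequence computation shows that $\overline{H\mathbb{Z}}$ is $1$-connective with $\pi_i(\overline{H\mathbb{Z}}) \cong \pi_i(S^0)$ for $i \geq 1$; in fact the natural map $\tau_{\geq 1}S^0 \to \overline{H\mathbb{Z}}$ (which exists because $\tau_{\geq 1}S^0 \to S^0 \to H\mathbb{Z}$ is nullhomotopic, $H\mathbb{Z}$ being $0$-truncated) is an equivalence. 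Smashing the fiber sequence $\overline{H\mathbb{Z}} \to S^0 \to H\mathbb{Z}$ with $X$ and taking homotopy groups gives the exact sequence
\[ \pi_n(X \wedge \overline{H\mathbb{Z}}) \to \pi_n(X) \to H_n(X;\mathbb{Z}) \to \pi_{n-1}(X \wedge \overline{H\mathbb{Z}}) \to \pi_{n-1}(X), \]
so that the kernel of the Hurewicz map in degree $n$ is a quotient of $\pi_n(X \wedge \overline{H\mathbb{Z}})$ and its cokernel is a subgroup of $\pi_{n-1}(X \wedge \overline{H\mathbb{Z}})$. Both assertions therefore reduce to the single claim: \emph{for every connective spectrum $X$ and every $m \geq 0$, the group $\pi_m(X \wedge \overline{H\mathbb{Z}})$ is annihilated by $\rho_1 \cdots \rho_m$.}

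The first step toward the claim is to truncate. The fiber of $\overline{H\mathbb{Z}} \to \tau_{\leq m}\overline{H\mathbb{Z}}$ is $\tau_{\geq m+1}\overline{H\mathbb{Z}}$, which is $(m+1)$-connective; since $X$ is connective, $X \wedge \tau_{\geq m+1}\overline{H\mathbb{Z}}$ is $(m+1)$-connective, and the long exact sequence yields an isomorphism $\pi_m(X \wedge \overline{H\mathbb{Z}}) \cong \pi_m(X \wedge \tau_{\leq m}\overline{H\mathbb{Z}})$. Under the equivalence above, $\tau_{\leq m}\overline{H\mathbb{Z}} \simeq \tau_{[1,m]}S^0$, the Postnikov section with homotopy groups $\pi_i(S^0)$ in degrees $1 \leq i \leq m$. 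This spectrum admits a finite filtration (for instance the tower of truncated connective covers $\tau_{[j,m]}S^0$) whose successive cofibers are the Eilenberg–MacLane spectra $\Sigma^i H\pi_i(S^0)$ for $1 \leq i \leq m$. This is the only place where connectivity of $X$ is used, and it is what makes a bound depending only on $n$ possible at all.

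The second step is the elementary observation that governs how exponents multiply along a filtration: if a spectrum $Y$ admits a finite filtration with cofibers $C_1, \dots, C_k$ such that $m_j \cdot \mathrm{id}_{C_j}$ is nullhomotopic for each $j$, then $(m_1 \cdots m_k)\cdot \mathrm{id}_Y$ is nullhomotopic. This is proved by induction on $k$ using the cofiber sequence $Y_{k-1} \to Y_k \to C_k$: the self-map $m_k\cdot \mathrm{id}_{Y_k}$ becomes null after composition with $Y_k \to C_k$, hence factors through $Y_{k-1} \to Y_k$, and multiplying by $m_1\cdots m_{k-1}$, which kills $Y_{k-1}$ by the inductive hypothesis, kills $m_k \cdot \mathrm{id}_{Y_k}$ as well. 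We apply this with $Y = \tau_{[1,m]}S^0$ and $C_i = \Sigma^i H\pi_i(S^0)$: the map $\rho_i\cdot \mathrm{id}_{C_i}$ is $\Sigma^i$ applied to $H(\rho_i\colon \pi_i(S^0)\to\pi_i(S^0))$, which is nullhomotopic because $\rho_i$ annihilates $\pi_i(S^0)$. Hence $(\rho_1\cdots\rho_m)\cdot\mathrm{id}_{\tau_{[1,m]}S^0}$ is nullhomotopic, so $(\rho_1\cdots\rho_m)\cdot \mathrm{id}_{X \wedge \tau_{[1,m]}S^0}$ is nullhomotopic, and in particular $\pi_m(X \wedge \overline{H\mathbb{Z}}) \cong \pi_m(X \wedge \tau_{[1,m]}S^0)$ is annihilated by $\rho_1 \cdots \rho_m$. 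This proves the claim and hence the theorem.

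None of the steps is genuinely difficult: the argument is essentially a careful run of the Atiyah–Hirzebruch spectral sequence $H_s(X;\pi_t(\overline{H\mathbb{Z}})) \Rightarrow \pi_{s+t}(X\wedge\overline{H\mathbb{Z}})$, in which the row $t = i$ is killed by $\rho_i$ and only the rows $1 \le t \le m$ contribute to $\pi_m$, organized so as to track the exponent of the identity map directly rather than through a count of filtration steps. If there is a ``main obstacle'' it is merely getting the connectivity estimate in the truncation step stated correctly, since everything else is formal; the real content of the paper will be showing that this bound is essentially sharp and analyzing the actual size of $\rho_1\cdots\rho_n$.
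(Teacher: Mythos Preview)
Your argument is correct and is essentially the approach the paper takes: the reduction to bounding $\pi_m(X \wedge \tau_{[1,m]}S^0)$ via the fiber sequence $\tau_{\geq 1}S^0 \to S^0 \to H\mathbb{Z}$ smashed with $X$ is exactly \Cref{Hurconnect}, and your filtration/exponent argument for $\tau_{[1,m]}S^0$ is \Cref{expcofiber} and \Cref{firstbound}. The paper does not spell out a separate proof of Arlettaz's theorem (it is cited as background), but combining \Cref{firstbound} with \Cref{Hurconnect} reproduces precisely what you wrote.
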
 

Different variants of this result have appeared in \cite{Arlettazfirst,
generalizedBoardman}, and this result has also been discussed in
\cite{beilinson}. 
The purpose of this note is to find the best possible bounds for these torsion 
exponents, up to small constants. We will do so at each prime $p$. 
In particular, we prove: 

\begin{theorem} 
\label{ourHurbound}
Let $X$ be a connective spectrum and let $n > 0$. Then: 
\begin{enumerate}[(a)]
\item The 2-exponent of the kernel of the Hurewicz map $\pi_n(X) \to H_n(X;
\mathbb{Z})$ is at most $\ceil{\frac{n}{2}} + 3$: that is,
$2^{\ceil{\frac{n}{2}} + 3}$ annihilates the 2-part of the kernel.  
\item  If $p$ is an odd prime,  
the $p$-exponent of the kernel of the Hurewicz map $\pi_n(X) \to H_n(X;
\mathbb{Z})$ is at most $\ceil{ \frac{n + 3}{2p-2} }  + 1$.
\item The 2-exponent of the cokernel of the Hurewicz map is at most 
$\ceil{\frac{n-1}{2}} + 3$.
\item  If $p$ is an odd prime, 
the $p$-exponent of the cokernel of the Hurewicz map is at most $\ceil{ \frac{n +
2}{2p-2} }  + 1$. 
\end{enumerate}
\end{theorem}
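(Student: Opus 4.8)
The key idea is to reduce the Hurewicz problem for an arbitrary connective spectrum $X$ to a universal statement about the sphere spectrum, and then to feed in sharp estimates on the torsion in the Postnikov truncations $\tau_{[1,n]} S^0$. Let me think about the structure.

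First, the reduction step. The Hurewicz map $\pi_*(X) \to H_*(X;\mathbb{Z})$ is the map on homotopy induced by the unit-type map $X \to X \wedge H\mathbb{Z}$, equivalently $X = X \wedge S^0 \to X \wedge H\mathbb{Z}$. The fiber of $S^0 \to H\mathbb{Z}$ is... well, let's think. We have the Postnikov tower of $S^0$. The map $S^0 \to H\mathbb{Z} = \tau_{\leq 0} S^0$ has fiber $\tau_{\geq 1} S^0$. So the fiber of $X \to X\wedge H\mathbb{Z}$ is $X \wedge \tau_{\geq 1} S^0$. For the statement in degree $n$, only the bottom $n$ or so homotopy groups of this fiber matter: more precisely, to control the kernel and cokernel of $\pi_n(X) \to \pi_n(X \wedge H\mathbb{Z})$ via the long exact sequence, we need information about $\pi_n$ and $\pi_{n-1}$ of $X \wedge \tau_{\geq 1} S^0$, hence we can replace $\tau_{\geq 1}S^0$ by its truncation $\tau_{[1,n]} S^0$.

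So the heart of the matter: if the spectrum $Y := \tau_{[1,n]} S^0$ is annihilated by an integer $e$ (i.e. $e \cdot \mathrm{id}_Y \simeq 0$, or more weakly $e$ annihilates $\pi_*(Y\wedge X)$ for all connective $X$), then $e$ annihilates the kernel and cokernel of the Hurewicz map in the appropriate degrees. The cleanest route is to show $\tau_{[1,n]}S^0$ has a bounded torsion exponent as a spectrum and then smash with $X$. So I would structure the paper around the quantity $e(n) := $ the exponent of $\tau_{[1,n]} S^0$ (the smallest $e>0$ with $e\cdot\mathrm{id} = 0$ on this spectrum), and the main technical theorem — proved elsewhere in the paper, which I am allowed to assume is available as the "estimates for torsion in Postnikov sections" promised in the abstract — gives $e(n) \mid p^{\lceil n/(2p-2)\rceil + O(1)}$ at each prime $p$, with the explicit constants $3$ (at $p=2$) and $1$ (at odd $p$) as in the statement. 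Given that, part (a) follows because the kernel in degree $n$ is a subquotient of $\pi_n(X \wedge \tau_{[1,n]}S^0)$ (killed by $e(n)$), and part (c)/(d) similarly from the cokernel being a quotient of $\pi_{n-1}(X\wedge\tau_{[1,n-1]}S^0)$, shifting $n \mapsto n-1$.

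The main obstacle — and the real content — is establishing the sharp torsion exponent of $\tau_{[1,n]}S^0$ at each prime. The plan there: work $p$-locally, and build $\tau_{[1,n]}S^0$ up its Postnikov tower, the successive fibers being Eilenberg–MacLane spectra $\Sigma^i H\pi_i(S^0)_{(p)}$. Each $k$-invariant lives in a cohomology group $H^{i+1}(\tau_{[1,i-1]}S^0; \pi_i S^0)$, which is a torsion group of known exponent (bounded using the structure of the Steenrod algebra — the point being that a $p$-fold iterate of Steenrod operations raises degree by a controlled amount, so only $O(n/(2p-2))$ stages can "interact"). The nonzero $p$-torsion homotopy of the sphere is concentrated in a sparse set of degrees (the first $p$-torsion is in degree $2p-3$, by the image of $J$), which is exactly what produces the $\frac{1}{2p-2}$ slope. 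I would assemble the exponent bound by a filtration/induction argument: an extension of a spectrum with exponent $p^a$ by one with exponent $p^b$ has exponent dividing $p^{a+b+1}$ in general, but using the sparsity one shows the total is $p^{\lceil n/(2p-2)\rceil + O(1)}$ rather than the naive product. The sharpness (that these bounds are optimal up to a constant) would come from exhibiting specific $X$ — $p$-locally, a suitable Thom spectrum or $\tau_{[1,n]}S^0$ itself — where the Hurewicz kernel genuinely has torsion of that order, using the nontriviality of a long chain of $k$-invariants detected by secondary operations or by $v_1$-periodic families.

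Finally, I would reconcile constants: part (b)'s $\lceil\frac{n+3}{2p-2}\rceil + 1$ versus part (d)'s $\lceil\frac{n+2}{2p-2}\rceil+1$ differ only by the $n \mapsto n-1$ shift together with a $+1$ bookkeeping from the long exact sequence connecting map, and the $+3$ at $p=2$ absorbs the less efficient behavior of $\mathrm{Sq}^1, \mathrm{Sq}^2$ and the order-$2$ classes $\eta,\eta^2,\nu$ in low degrees; I expect these to be routine once the clean exponent estimate for $\tau_{[1,n]}S^0$ is in hand.
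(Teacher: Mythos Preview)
Your reduction of the Hurewicz kernel/cokernel to the exponent of $\tau_{[1,n]} S^0$ via the fiber sequence $X \wedge \tau_{\geq 1} S^0 \to X \to X \wedge H\mathbb{Z}$ is exactly the paper's \Cref{Hurconnect}, and the bookkeeping of the $n \mapsto n-1$ shift for the cokernel is right. So if you genuinely take the exponent estimate $\exp_p(\tau_{[1,n]}S^0) \leq \ceil{\frac{n}{2p-2}} + O(1)$ as a black box from elsewhere in the paper, your deduction of (a)--(d) is correct and matches the paper's proof.

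The gap is in your sketched approach to that exponent estimate. Filtering $\tau_{[1,n]}S^0$ by its \emph{Postnikov} tower and summing exponents of the layers $\Sigma^i H\pi_i(S^0)_{(p)}$ recovers only the Arlettaz bound $\sum_{i=1}^n \exp_p(\pi_i S^0)$; your appeal to ``sparsity'' does not rescue this, since at $p=2$ the stable stems are nonzero in almost every degree, and at any prime the individual $\pi_i(S^0)_{(p)}$ can have $p$-exponent growing with $i$ (already the image of $J$ does). The paper instead filters by the \emph{Adams} resolution of $\tau_{\geq 1}S^0$: each successive cofiber is a wedge of shifts of $H\mathbb{F}_p$, hence annihilated by $p$, so the $m$th quotient has exponent $p^m$ regardless of the size of the stable stems. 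The vanishing line $\mathrm{Ext}^{s,t}_{\A_p}(\mathbb{F}_p,\mathbb{F}_p)=0$ for $t-s < (2p-2)s - O(1)$ (Adams at $p=2$, Liulevicius at odd $p$) then forces the $m$th Adams stage to be roughly $(2p-2)m$-connective, so $m \approx n/(2p-2)$ suffices to compute $\tau_{\leq n}$. That is the true source of the slope $\frac{1}{2p-2}$ and of the additive constants --- they are the intercept of the Adams--Liulevicius vanishing line, not a low-degree correction from $\eta,\eta^2,\nu$.
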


We will also show that these bounds are close to being the best possible. 
\begin{proposition} \label{hurbestpossible}
\begin{enumerate}[(a)]
\item For each $r$, there exists a connective 2-local spectrum $X$ and an
element $x \in \pi_{2r-1}(X)$ in the kernel of the Hurewicz map such that the order
of $x$ is at least $2^{r-1}$. 
\item Let $p$ be an odd prime. For each $r$, there exists a connective $p$-local 
spectrum $X$ and an element $x \in \pi_{(2p-2)r + 1}(X)$ annihilated by the
Hurewicz map such that the order of $x$ is at least $p^{r}$. 
\end{enumerate}
\end{proposition}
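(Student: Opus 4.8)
The plan is to find the required spectra among suspension spectra of (stunted) classifying spaces $B\mathbb{Z}/p$, and their suspensions; for part (a) one works with $B\mathbb{Z}/2=\mathbb{RP}^{\infty}$ and stunted real projective spaces. The underlying point is that the high $p$-power torsion which is absent from $\pi_{\ast}(S^0)$ in low degrees is present in $\pi_{\ast}^{s}(B\mathbb{Z}/p)$: the connective complex $K$-homology $\widetilde{ku}_{\ast}(B\mathbb{Z}/p)$ — at odd $p$, the Adams summand homology $\widetilde{\ell}_{\ast}(B\mathbb{Z}/p)$ — is cyclic of order $p^{j}$ concentrated in degree $2(p-1)j-1$, so it grows at exactly the rate $1/(2p-2)$ demanded by the Proposition, and, unlike for the sphere, this torsion is detected in genuine stable homotopy. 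Granting this, $X$ will be an appropriate (stunted) model of $\Sigma^{\infty}B\mathbb{Z}/p$ together with an integer suspension chosen so that the relevant homotopy class lands in degree exactly $(2p-2)r+1$ (resp. $2r-1$) and the integral homology of $X$ is as small as possible there; the Hurewicz-triviality is then essentially forced.

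The crux is producing a stable homotopy class of order $\approx p^{r}$. Here I would invoke Adams's $e$-invariant together with the computation of the $K$-theory of lens spaces: the unit map $S^{0}\to\ell$ induces $\pi_{\ast}^{s}(B\mathbb{Z}/p)\to\widetilde{\ell}_{\ast}(B\mathbb{Z}/p)$, and one must show that in degree $2(p-1)r-1$ its image contains a cyclic subgroup of order $p^{r-O(1)}$ — equivalently, that the $K$-theoretic torsion is not washed out into higher Adams filtration. This is precisely the kind of statement controlled by non-vanishing of $e$-invariants (the image-of-$J$/Bernoulli-denominator calculations), and it is where the real work lies; concretely, one can realize such a class by iterating a $v_{1}$-self map of a suitable finite stunted lens space and composing with the inclusion of the bottom cell, the order of the result being read off from $K$-theory. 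I expect this to be the main obstacle.

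For the degree-matching and the Hurewicz-triviality: $\widetilde H_{n}(B\mathbb{Z}/p;\mathbb{Z})$ is $\mathbb{Z}/p$ for $n>0$ odd and $0$ for $n>0$ even, so the Hurewicz image of any class in $\pi^{s}_{n}(B\mathbb{Z}/p)$ is annihilated by $p$; hence $p$ times the class from the previous paragraph already has order $p^{r-O(1)}$ and lies in the kernel of the Hurewicz map, and a suspension — keeping the spectrum connective — puts it in the prescribed degree, which already proves the Proposition up to a constant factor. To obtain the exact statement one instead takes $X$ to be a stunted projective/lens space whose cell structure is shifted by an odd amount, so that the Thom isomorphism interchanges the two parities and $H_{(2p-2)r+1}(X;\mathbb{Z})$ vanishes outright; the class then survives into the Hurewicz kernel with its full order $\ge p^{r}$. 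Part (a) is parallel, using $\widetilde{ku}_{2j-1}(\mathbb{RP}^{\infty})\cong\mathbb{Z}/2^{j}$, stunted real projective spaces, and the fact that here the target degree $2r-1$ is already the natural (odd) degree of the $K$-theoretic torsion.
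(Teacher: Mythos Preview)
Your approach differs substantially from the paper's, and the step you flag as ``where the real work lies'' is a genuine gap that the paper sidesteps entirely.

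You propose to find high-order classes directly in $\pi_*^s(B\mathbb{Z}/p)$ by lifting the $K$-homology (or $\ell$-homology) torsion back along the Hurewicz map $\pi_*^s(B\mathbb{Z}/p)\to \widetilde{\ell}_*(B\mathbb{Z}/p)$. But nothing you've said guarantees that this map hits elements of order $p^{r}$; in principle the image could have much smaller exponent. Your suggested fix via iterated $v_1$-self maps composed with a bottom-cell inclusion does not obviously help: for instance, on the Moore spectrum $S^0/p$ the class $v_1^r\circ\iota\in\pi_{(2p-2)r}(S^0/p)$ maps to an element of order only $p$ in $\ell_*(S^0/p)$, so reading the order off $K$-theory in that way gives nothing. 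One can likely make your line of argument work with genuine image-of-$J$ input, but that is substantially harder than what is needed.

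The paper avoids this entirely by a change of target: instead of looking for classes in $\pi_*(Y)$ for $Y$ a (stunted) lens space, it takes the \emph{endomorphism spectrum} $Z=\hom(\Sigma^\infty\mathbb{RP}^{2k},\Sigma^\infty\mathbb{RP}^{2k})\simeq \Sigma^\infty\mathbb{RP}^{2k}\wedge \mathbb{D}(\Sigma^\infty\mathbb{RP}^{2k})$ and uses the class $x=\mathrm{id}\in\pi_0(Z)$. The lower bound on the order of $x$ is then automatic: applying the functor $K^0$ to $\mathrm{id}$ gives $\mathrm{id}$ on $\widetilde K^0(\mathbb{RP}^{2k})\simeq\mathbb{Z}/2^k$, so $x$ has order at least $2^k$---no lifting problem arises. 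Since the integral homology of $\mathbb{RP}^{2k}$ is annihilated by $2$, K\"unneth shows $H_*(Z;\mathbb{Z})$ is all $2$-torsion, so $2x$ lies in the Hurewicz kernel with order $\geq 2^{k-1}$; suspending $Z$ by $2k-1$ makes it connective and places the class in $\pi_{2k-1}$. The odd-primary case is identical with $\mathbb{RP}^{2k}$ replaced by the skeleton $Y_k$ of $B\Sigma_p$ (whose $K^0$ is $\mathbb{Z}/p^k$ by the Atiyah--Hirzebruch spectral sequence). This is exactly the same $K$-theory input you invoke, but by packaging it as the order of an identity map rather than as a detection statement for stable homotopy classes, the paper gets the high-order element for free.
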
 

Our strategy in proving \Cref{ourHurbound} is to translate the above 
question into one about the Postnikov sections $\tau_{[1, n]} S^0$ and their
exponents in the homotopy category of spectra (rather than the exponents of
some algebraic invariant). We shall use a classical
technique with vanishing lines to show that, at a prime $p$, the $\tau_{[1, n]} S^0$
are annihilated by $p^{n/(2p-2) + O(1)}$. 
This, combined with a bit of diagram-chasing, will imply the upper bound of 
\Cref{ourHurbound}. The lower bounds will follow from explicit examples.

Finally, we show that these methods have additional applications and that the
precise order of the $n$-truncations $\tau_{[1, n]}S^0$ play an important
role in several settings. For instance,
we sharpen bounds of Arlettaz \cite{arlettazkinv} on the orders of the $k$-invariants
of a spectrum (\Cref{ourboundkinv}), improve and make explicit half of a result of Beilinson \cite{beilinson} on
the (unstable) Hurewicz map $\pi_n(X) \to H_n(X; \mathbb{Z})$ for $X$ an
infinite loop space (\Cref{hurmapifl}), and prove an exponent theorem for the equivariant stable
stems (\Cref{equivexponent}).

We also obtain as a consequence the following result. 
\begin{theorem} 
\label{torsionorder}
Let $p$ be a prime number. 
Let $X$ be a spectrum with homotopy groups concentrated in 
degrees $[a, b]$. Suppose each $\pi_i(X)$ is annihilated by $p^k$. Then $p^{k + 
\frac{b-a}{p-1} + 8}$ annihilates $X$ (\Cref{annihilated} below).  
\end{theorem}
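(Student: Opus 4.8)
The plan is to translate the statement into one about the exponent of $X$ in the homotopy category, i.e.\ the order of $\mathrm{id}_X$ in $\pi_0 F(X,X)$, and to bound it by climbing the Postnikov tower of $X$ while tracking exactly how the truncations $\tau_{[1,m]}S^0$ enter. First I would make the routine reductions: since every $\pi_i(X)$ is a finite $p$-group, $X$ is $p$-local, so it suffices to produce the asserted annihilating power of $p$; and after smashing with $\Sigma^{-a}S^0$ we may assume $a=0$, so that $X$ is connective with $\pi_i(X)=0$ for $i>N$, where $N:=b-a$, and each $\pi_i(X)$ is killed by $p^k$. The goal becomes $p^{\,k+\frac{N}{p-1}+8}\cdot\mathrm{id}_X=0$.

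For the main step, note that since each $\pi_i(X)$ is killed by $p^k$ the map $p^k\cdot\mathrm{id}_X$ is zero on all homotopy groups; as $H_0(X)\cong\pi_0(X)$, the composite $X\xrightarrow{p^k}X\to\tau_{\le 0}X$ is null, so $p^k\cdot\mathrm{id}_X$ lifts to a map $\phi_1\colon X\to\tau_{\ge 1}X$ which is again zero on homotopy. I would then climb the tower of connective covers $\cdots\to\tau_{\ge i+1}X\to\tau_{\ge i}X\to\cdots$ inductively: given $\phi_i\colon X\to\tau_{\ge i}X$ that is zero on homotopy, its bottom component $\psi_i\in[X,\Sigma^iH\pi_iX]=H^i(X;\pi_iX)$ lies in the kernel of the evaluation $H^i(X;\pi_iX)\to\hom(\pi_iX,\pi_iX)$, which by the Postnikov analysis is the image of $H^i(\tau_{\le i-1}X;\pi_iX)$; multiplying $\phi_i$ by a power $p^{c_i}$ killing this group lets $p^{c_i}\phi_i$ lift to $\tau_{\ge i+1}X$. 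After $N$ steps one obtains $p^{\,k+c_1+\cdots+c_N}\cdot\mathrm{id}_X=0$, and the crucial feature is that $p^k$ has been paid only \emph{once}. I would bound the $c_i$ by the $p$-exponents appearing in \Cref{ourHurbound} and \Cref{ourboundkinv}, all of which are controlled, via the fibre sequence $\tau_{[1,i]}S^0\to\tau_{\le i}S^0\to H\mathbb{Z}$, by the exponents of $\tau_{[1,i]}S^0$; since those grow like $p^{i/(2p-2)}$ and only at the $v_0$-torsion degrees of the sphere, the successive increments telescope, giving $c_1+\cdots+c_N=\tfrac{2N}{2p-2}+O(1)=\tfrac{N}{p-1}+O(1)$ — the factor $2$ reflecting that both the ``source'' contribution (the Hurewicz kernel $\pi_\ast(\tau_{[1,i]}S^0\wedge X)$) and the ``coefficient'' contribution (the $k$-invariants, living in cohomology of truncations) each cost about $\tfrac{N}{2p-2}$. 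Folding the $O(1)$ into the constant $8$ finishes the estimate.

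The step I expect to be the real obstacle is precisely this last estimate: ruling out that the corrections $c_i$ secretly re-introduce a factor of $p^k$ at each stage, or that they accumulate quadratically. The naive choice — take $c_i$ to be the full exponent of $H^i(\tau_{\le i-1}X;\pi_iX)$ — is too crude, since that group contains an $\mathrm{Ext}(\pi_{i-1}X,\pi_iX)$-summand killed only by $p^k$ together with a coefficient factor of exponent up to $p^{i/(2p-2)}$; summing over $i$ this yields only the far weaker $p^{\,kN}$ or $p^{\,N^2/(p-1)}$. The content of \Cref{annihilated} is that the specific obstruction classes $\psi_i$ produced by the single map $p^k\cdot\mathrm{id}_X$ are highly non-generic: up to an error lying in $\tau_{[1,\bullet]}S^0$ they are the obstruction classes of the generalized Eilenberg--MacLane spectrum $H\mathbb{Z}\wedge X$, which vanish identically, so that only the genuinely new $v_0$-torsion of the sphere is ever paid for. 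Making this precise — with the connectivity bookkeeping and the explicit constants — is where the work lies.
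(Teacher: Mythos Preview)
Your proposal is not the paper's argument, and the gap you yourself flag is real. Your Postnikov-climbing scheme requires controlling each obstruction class $\psi_i\in H^i(X;\pi_iX)$ by something of size $p^{O(1)}$ on average, yet the group in which $\psi_i$ lives has $p$-exponent roughly $k+\tfrac{i}{2p-2}$, and you give no mechanism for why the \emph{particular} classes arising from $p^k\cdot\mathrm{id}_X$ evade this. The heuristic that ``up to $\tau_{[1,\bullet]}S^0$ they are the $k$-invariants of $H\mathbb{Z}\wedge X$, which vanish'' is suggestive but not a proof: the comparison map between the two towers is itself governed by the same unknown torsion, so this is circular as stated. Without a concrete telescoping identity you are stuck with the crude bound $\sum_i c_i$, which as you note only gives $p^{kN}$ or worse.

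The paper sidesteps all of this by not climbing the Postnikov tower of $X$ at all. Instead it applies $\hom(-,X)$ to the single cofiber sequence
\[
\tau_{[1,\infty]}S^0\wedge X \longrightarrow X \longrightarrow H\mathbb{Z}\wedge X,
\]
yielding an exact sequence
\[
\pi_0\hom(H\mathbb{Z}\wedge X,X)\to \pi_0\hom(X,X)\to \pi_0\hom(\tau_{[1,\infty]}S^0\wedge X,X).
\]
The right-hand term is annihilated by $p^{\exp_p(\tau_{[1,\ell]}S^0)}$ since one may replace $\tau_{[1,\infty]}S^0$ by $\tau_{[1,\ell]}S^0$ (connectivity) and that spectrum has the stated exponent. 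For the left-hand term, one replaces $H\mathbb{Z}\wedge X$ by its $\ell$-truncation, a generalized Eilenberg--MacLane spectrum whose homotopy groups are the $H_i(X;\mathbb{Z})$; by the cokernel half of the Hurewicz bound, each $H_i(X;\mathbb{Z})$ is killed by $p^{k+\exp_p(\tau_{[1,\ell-1]}S^0)}$, and a generalized Eilenberg--MacLane spectrum has the same exponent as its homotopy. Hence the middle term, and in particular $\mathrm{id}_X$, is killed by $p^{k+\exp_p(\tau_{[1,\ell]}S^0)+\exp_p(\tau_{[1,\ell-1]}S^0)}$. The factor $p^k$ enters exactly once, the two $\tau_{[1,\bullet]}S^0$ contributions give $\tfrac{\ell}{p-1}+O(1)$, and no inductive bookkeeping is needed.
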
 

We have not tried to make the bounds in \Cref{torsionorder} as sharp as
possible since we suspect that our techniques are not sharp to begin with.

\subsection*{Notation}
In this paper, for a spectrum $X$, we will write $\tau_{[a, b]} X$ to denote
the Postnikov section of $X$ with homotopy groups in the range $[a, b]$, i.e.,
$\tau_{\geq b} \tau_{\leq a} X$.
Given spectra $X, Y$, we will let $\hom(X, Y)$ denote the function spectrum
from $X$ into $Y$, so that $\pi_0 \hom(X, Y)$
denotes homotopy classes of maps $X \to Y$.

\subsection*{Acknowledgments}
I would like to thank Mike Hopkins and Haynes Miller, from whom (and whose papers) I learned 
many of the ideas used here.  
I would also like to thank Peter May for several helpful comments and Dustin Clausen for pointing me to \cite{beilinson}. 
The author was supported by the NSF Graduate
Fellowship under grant  DGE-110640.

\section{Definitions}

Let $\mathcal{C}$ be a triangulated category. 
We recall:

\begin{definition} \label{annihilated}
Let $X \in \mathcal{C}$ be an object. We will say that $X$ is \emph{annihilated
by $n \in \mathbb{Z}_{>0}$}
if $n
\mathrm{id}_X \in  \hom_{\mathcal{C}}(X, X) $ is equal to zero.
We let $\mathrm{exp}(X)$ denote the minimal $n$ (or $\infty$ if no such exists) such that $n$
annihilates $X$.
\end{definition}

If $\mathcal{D}$ is any additive category and $F\colon \mathcal{C} \to \mathcal{D}$
any additive functor, then if $X \in \mathcal{C} $ is annihilated by $n$, then
$F(X) \in \mathcal{D}$ has $n \mathrm{id}_{F(X)} =0 $ too. 
Here are several important examples of this phenomenon. 

\begin{example}
\label{cohomfunctor}
Given any (co)homological functor $F\colon \mathcal{C} \to \mathrm{Ab}$,
the value of $F$ on an object annihilated by $n$ is  a torsion abelian group 
of exponent at most $n$.
For instance, if $X$ is a spectrum annihilated by $n$, then the homotopy groups
of $X$ all have exponent at most $n$. 
\end{example}
\begin{example} 
\label{truncationtor}
Suppose $\mathcal{C} $ has a $t$-structure, so that we can construct
truncation functors $\tau_{\leq k} \colon \mathcal{C} \to \mathcal{C}$ for $k
\in \mathbb{Z}$. Let $X \in \mathcal{C}$ be any
object. Then, for any $k$, $\mathrm{exp}(\tau_{\leq k} X) \mid \mathrm{exp}(X)$.
\end{example} 
\begin{example} 
\label{smashexponent}
Suppose $\mathcal{C}$ has a compatible monoidal structure $\wedge$. Then if
$X, Y \in \mathcal{C}$, we have $\mathrm{exp} (X \wedge Y) \mid 
\mathrm{gcd}\left(\mathrm{exp}(X), \mathrm{exp}(Y)\right)$.
\end{example}

Next, we note that such torsion questions can be reduced to local ones at each prime $p$, and it
will be therefore convenient to have the following notation.
\begin{definition} 
Given $X \in \mathcal{C}$, we define $\exp_p(X)$ to be the minimal integer $n
\geq 0$
(or $\infty$ if none such exists) such that $p^n \mathrm{id}_X = 0 $ in the
group $\hom_{\mathcal{C}}(X, X)_{(p)}$.
For a torsion abelian group $A$, we will also use the notation $\exp_p(A)$ in
this manner. 
\end{definition}

\begin{proposition} 
\label{expcofiber}
Let $X' \to X \to X''$ be a cofiber sequence in $\mathcal{C}$. Suppose $X'$ is
annihilated by $m$ and $X''$ is annihilated by $n$. Then $X$ is annihilated by
$mn$. Equivalently, $\exp_p(X) \leq \exp_p(X') +\exp_p(X'')$ for each prime $p$.
\end{proposition}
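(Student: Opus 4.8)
The statement to prove is: given a cofiber sequence $X' \to X \to X''$ in a triangulated category $\mathcal{C}$, with $X'$ annihilated by $m$ and $X''$ annihilated by $n$, then $X$ is annihilated by $mn$, and equivalently $\exp_p(X) \le \exp_p(X') + \exp_p(X'')$.

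The plan is to exhibit the morphism $mn \cdot \mathrm{id}_X$ as a composite that is forced to vanish. First I would consider the map $n \cdot \mathrm{id}_X \colon X \to X$. Since $X'' $ is annihilated by $n$, the composite $X \to X'' \xrightarrow{n} X''$ is zero; but this composite also equals $X \xrightarrow{n} X \to X''$, so $n \cdot \mathrm{id}_X$ composed with the map $X \to X''$ is null. By the defining property of the cofiber sequence (the long exact sequence obtained by applying $\hom_{\mathcal{C}}(X, -)$ to the triangle $X' \to X \to X'' \to \Sigma X'$, or equivalently the axiom that a map $X \to X$ killed by projection to $X''$ factors through $X'$), there exists a map $f \colon X \to X'$ with $n \cdot \mathrm{id}_X = \iota \circ f$, where $\iota \colon X' \to X$ is the first map of the sequence.

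Next I would multiply by $m$: we have $mn \cdot \mathrm{id}_X = m(\iota \circ f) = \iota \circ (m \cdot f) = \iota \circ (f \circ m\,\mathrm{id}_X)$, and since additivity lets us pull the scalar onto either factor, $m \cdot f = (m \cdot \mathrm{id}_{X'}) \circ f$. But $X'$ is annihilated by $m$, so $m \cdot \mathrm{id}_{X'} = 0$, hence $m \cdot f = 0$ and therefore $mn \cdot \mathrm{id}_X = 0$. This proves $X$ is annihilated by $mn$.

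For the equivalent reformulation, I would specialize to $m = p^{\exp_p(X')}$ and $n = p^{\exp_p(X'')}$ (working in the $p$-local category, or noting the argument applies verbatim with $\hom_{\mathcal{C}}(-,-)_{(p)}$ in place of $\hom_{\mathcal{C}}(-,-)$), so that $p^{\exp_p(X') + \exp_p(X'')} = mn$ annihilates $X$, giving $\exp_p(X) \le \exp_p(X') + \exp_p(X'')$. Conversely, summing these inequalities over all $p$ (and using that the $p$-exponents of a cofiber sequence control the global exponent via the prime factorization) recovers the multiplicative statement; this direction is essentially bookkeeping.

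I do not expect a genuine obstacle here: the only point requiring care is the invocation of the triangulated-category axiom that a morphism into $X$ which dies after composing with $X \to X''$ factors through $\iota \colon X' \to X$ — this is exactly exactness of $\hom_{\mathcal{C}}(X, X') \to \hom_{\mathcal{C}}(X, X) \to \hom_{\mathcal{C}}(X, X'')$, which follows from rotating the triangle and applying the homological functor $\hom_{\mathcal{C}}(X, -)$. One should also be mindful that $f$ is not unique, but uniqueness is irrelevant since any choice of $f$ yields $mn \cdot \mathrm{id}_X = 0$.
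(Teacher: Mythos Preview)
Your proof is correct and follows essentially the same approach as the paper: both arguments rest on the exact sequence $\hom_{\mathcal{C}}(X, X') \to \hom_{\mathcal{C}}(X, X) \to \hom_{\mathcal{C}}(X, X'')$, with the paper observing abstractly that the outer groups have exponents dividing $m$ and $n$ so the middle has exponent dividing $mn$, while you unpack this for the particular element $\mathrm{id}_X$. The difference is purely presentational.
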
\begin{proof} 

We have an exact sequence
of abelian groups
\[ \hom_{\mathcal{C}}(X, X' ) \to \hom_{\mathcal{C}}(X, X) \to
\hom_{\mathcal{C}}(X, X'').  \]
If $X'$ (resp. $X''$) is annihilated by $m$ (resp. annihilated by $n$), then it
follows that groups on the edges of the above exact sequence are of exponents 
dividing $m$ and $n$, respectively. It follows that $\hom_{\mathcal{C}}(X, X)$ is
annihilated by $mn$, and in particular the identity map $\mathrm{id}_X \in
\hom_{\mathcal{C}}(X, X)$ is annihilated by $mn$. 
\end{proof} 
\begin{corollary} 
\label{firstbound}
Let $X$ be a spectrum with homotopy groups concentrated in degrees 
$[m, n]$ for $m , n \in \mathbb{Z}$. Suppose for each $i \in [m, n]$, we have
an integer $e_i > 0$ with $e_i \pi_i(X) = 0$. Then $\mathrm{exp}(X) \mid \prod_{i =
m}^n e_i$.
\end{corollary}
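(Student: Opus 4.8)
The plan is to induct on the length $n - m$ of the interval in which the homotopy groups of $X$ are concentrated, using \Cref{expcofiber} together with the Postnikov tower.

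First I would handle the base case $m = n$: here $X$ is an Eilenberg--MacLane spectrum $\Sigma^m H\pi_m(X)$ (up to shift), and $\mathrm{exp}(X) \mid e_m$ because the identity of $X$ factors through $\pi_0 \hom(X, X) \cong \hom_{\mathbb{Z}}(\pi_m(X), \pi_m(X))$ — this is the statement that for an EM spectrum the self-map group is just the abelian-group Hom, so multiplication by $e_m$ kills the identity as soon as $e_m$ kills $\pi_m(X)$. (Alternatively one invokes \Cref{cohomfunctor} in reverse for EM spectra, but the direct argument is cleanest.)

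For the inductive step, consider the Postnikov cofiber sequence
\[ \Sigma^m H\pi_m(X) \to X \to \tau_{[m+1, n]} X. \]
The fiber term is annihilated by $e_m$ by the base case. The quotient term $\tau_{[m+1, n]} X$ has homotopy concentrated in $[m+1, n]$ and its homotopy groups are still annihilated by the respective $e_i$ (they are literally the same groups $\pi_i(X)$ for $i \in [m+1,n]$), so by the inductive hypothesis it is annihilated by $\prod_{i = m+1}^n e_i$. Now \Cref{expcofiber} gives that $X$ is annihilated by $e_m \cdot \prod_{i=m+1}^n e_i = \prod_{i=m}^n e_i$, completing the induction.

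I do not expect any real obstacle here; the only point requiring a moment's care is the base case — knowing that an Eilenberg--MacLane spectrum $\Sigma^m HA$ is annihilated by any integer annihilating $A$ — and the routine check that each Postnikov truncation has the expected homotopy groups with the expected exponents. Everything else is a direct application of \Cref{expcofiber}.
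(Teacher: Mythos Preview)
Your approach is exactly the one the paper has in mind: the corollary is stated with no proof precisely because it is the evident induction on $n-m$ using \Cref{expcofiber} and the Postnikov tower, with the base case being an Eilenberg--MacLane spectrum.

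One small slip: the sequence you wrote, $\Sigma^m H\pi_m(X) \to X \to \tau_{[m+1,n]}X$, is not a cofiber sequence as stated --- there is in general no map $\Sigma^m H\pi_m(X) \to X$ (such a map would split off the bottom Postnikov section). The correct cofiber sequence runs the other way,
\[
\tau_{[m+1,n]}X \longrightarrow X \longrightarrow \Sigma^m H\pi_m(X),
\]
coming from the truncation $X \to \tau_{\leq m}X = \Sigma^m H\pi_m(X)$ with fiber $\tau_{\geq m+1}X$. (Alternatively, peel off the top: $\Sigma^n H\pi_n(X) \to X \to \tau_{[m,n-1]}X$.) Either way \Cref{expcofiber} applies with the same two inputs and the same product bound, so the argument goes through unchanged once the arrows are corrected.
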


The main purpose of this paper is to determine the behavior of the function $\exp_p(
\tau_{[1, n]} S^0)$ as $n$ varies. \Cref{firstbound} gives the bound that 
$\exp_p(
\tau_{[1, n]} S^0)$ is at most the sum  of the exponents of the
torsion abelian groups $\pi_i(S^0)_{(p)}$ for $1 \leq i \leq n$. We will give a
stronger upper
bound for this function, and show that it is essentially optimal.

\begin{theorem}[Main theorem]
\label{maintheorem}
\begin{enumerate}[(a)]
\item  
Let $p  = 2$. Then: 
\begin{equation}\floor{\frac{n-1}{2}}  \leq \exp_2( \tau_{[1, n]} S^0)  \leq
\ceil{\frac{n}{2}} + 3. \end{equation}
\item
Let $p$ be odd. Then:
\begin{equation}
\floor{ \frac{n - 1}{2p-2}}  \leq 
\exp_p( \tau_{[1, n]} S^0)
\leq 
\ceil{ \frac{n + 3}{2p-2} }  + 1
\end{equation}

\end{enumerate}
\end{theorem}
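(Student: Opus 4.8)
The plan is to separate the upper and lower bounds, and to work one prime at a time throughout. For the \emph{upper bounds}, the naive estimate from \Cref{firstbound} gives $\exp_p(\tau_{[1,n]}S^0)$ as a sum of $\sim n/(2p-2)$ exponents of stable stems, which is too weak because those exponents themselves grow. The idea is instead to exploit a vanishing line in the Adams spectral sequence: classically (this goes back to work of Adams and of Hopkins--Miller--type arguments), the $\mathrm{mod}\ p$ Adams spectral sequence for the sphere has, above the Hopkins--Mahowald vanishing line, a slope that forces any class in stem $t-s$ to have filtration at least roughly $(t-s)/(2p-2)$. I would first recall/state this vanishing-line input precisely, then deduce that $p^{N}$ annihilates $\tau_{[1,n]}S^0$ for $N$ the appropriate ceiling: smashing $\tau_{[1,n]}S^0$ with itself $N$ times lands below the vanishing line in the relevant range and hence is contractible in that range, and one converts this into an actual nullhomotopy of $p^N \cdot \mathrm{id}$ using \Cref{expcofiber} (the cofiber-sequence subadditivity of $\exp_p$) together with \Cref{truncationtor}. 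The constants $+3$ (at $p=2$) and $+1$ (odd $p$) come from keeping careful track of the intercept of the vanishing line and of edge effects at the two ends of the interval $[1,n]$.

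For the \emph{lower bounds}, I would produce explicit classes in $\hom(\tau_{[1,n]}S^0, \tau_{[1,n]}S^0)$, or rather show $p^{k}\,\mathrm{id} \neq 0$ for $k$ one less than the claimed floor, by pairing with a cohomology operation. Concretely: it suffices to exhibit a single nonzero element detected in a suitable $\mathrm{Ext}$ group or a single secondary operation that survives. At odd primes the relevant input is the tower of $\alpha$-family elements $\alpha_{i/j}\in\pi_{(2p-2)i-1}(S^0)$ of order $p^{j}$ (with $j$ as large as $\sim i/(p-1)$ near the top of the $v_1$-periodic pattern, via the image of $J$); at $p=2$ one uses the image-of-$J$ classes in $\pi_{2r-1}$ of $2$-order growing like $r$. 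The mechanism is: if $x\in\pi_m(S^0)$ has order exactly $p^{k+1}$ and $m\le n$, then the composite $S^m \to S^0 \to \tau_{[1,n]}S^0$ (using $m\geq 1$) is a map to $\tau_{[1,n]}S^0$ on which $p^{k}$ acts nontrivially, hence $\exp_p(\tau_{[1,n]}S^0)\ge k+1$; I would phrase this as: $\exp_p(\tau_{[1,n]}S^0)\ge \exp_p\big(\pi_m(S^0)\big)$ for every $m\in[1,n]$, and then plug in the best known $p$-order in the $v_1$-periodic range, which is exactly $\lfloor (m-1)/(2p-2)\rfloor$ up to the indexing in the statement. (This same family of examples will be re-used for \Cref{hurbestpossible}.)

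I would organize the write-up as: (1) a lemma recording the Adams vanishing line and its slope; (2) the upper-bound argument, reducing via \Cref{expcofiber}/\Cref{truncationtor} and the smash-power trick to the vanishing line; (3) the lower-bound argument via the image of $J$, citing Adams' computation of the $p$-orders of the $J$-homomorphism image (and at $p=2$ the $\mu$-family / $2$-primary $\mathrm{im}\,J$).

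The main obstacle I anticipate is step (2): turning a statement about a range of Adams filtrations being empty into a genuine bound on $p^N\,\mathrm{id}$ of the \emph{truncated} spectrum requires care, because $\tau_{[1,n]}S^0$ is not a ring and one cannot literally ``multiply'' self-maps. The clean route is to avoid smash powers of the truncation and instead induct on $n$ using the cofiber sequences $\tau_{[1,n-1]}S^0 \to \tau_{[1,n]}S^0 \to \Sigma^n H\pi_n(S^0)$, applying \Cref{expcofiber} at each stage but \emph{resetting} the filtration estimate using the vanishing line rather than the trivial bound $\exp_p(H\pi_n(S^0)) \le \exp_p(\pi_n(S^0))$ — i.e., one must show the connecting maps in iterated cofiber sequences die quickly, which is exactly where the slope $1/(2p-2)$ enters. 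Getting the additive constants optimal (so that they match the examples up to the stated $O(1)$) is the fiddly part, but it is bookkeeping once the vanishing line is in hand.
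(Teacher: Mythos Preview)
Your upper-bound strategy has the right key input (the Adams vanishing line), but the implementation you sketch is not quite the one that works cleanly. Inducting along the Postnikov cofiber sequences $\tau_{[1,n-1]}S^0 \to \tau_{[1,n]}S^0 \to \Sigma^n H\pi_n(S^0)$ and then ``resetting'' is essentially the route that produces the weak bound of \Cref{firstbound}; it is not clear how the vanishing line enters that induction. The paper instead filters $\tau_{\geq 1}S^0$ by its Adams tower: each layer is a wedge of suspensions of $H\mathbb{F}_p$ and hence annihilated by $p$, and the vanishing line says that after $m \approx n/(2p-2)$ layers the remaining term $F_m$ is $n$-connected. Then $\tau_{[1,n]}S^0$ is an $n$-truncation of an iterated extension of $m$ objects each killed by $p$, so \Cref{expcofiber} and \Cref{truncationtor} give $\exp_p \leq m$ directly. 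This is the clean way to convert the vanishing line into an exponent bound, and it avoids both the smash-power issue you flagged and the vague ``resetting'' step.

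Your lower-bound argument has a genuine gap. You claim that $\exp_p(\tau_{[1,n]}S^0)\ge \exp_p(\pi_m(S^0))$ for $m\in[1,n]$, which is correct, and then assert that the image of $J$ supplies elements of order roughly $p^{m/(2p-2)}$. It does not: the $p$-order of the image of $J$ in $\pi_{(2p-2)k-1}(S^0)$ is $p^{1+v_p(k)}$, which is only $O(\log k)$. No known element of an individual stable stem has $p$-order growing linearly in the stem, so bounding $\exp_p$ of the spectrum from below by the exponent of a single homotopy group cannot give the linear lower bound in the theorem. The paper's argument is genuinely different: it takes the $(2r)$-skeleton $X$ of $B\mathbb{Z}/p$ (at $p=2$, $\mathbb{RP}^{2r}$), observes that $\widetilde{K}^0(X)\simeq \mathbb{Z}/p^r$ forces $\exp_p(X)\geq r$ even though every homology group of $X$ is killed by $p$, and then uses a comparison $\exp_p(X)=\exp_p(\tau_{[0,m]}S^0\wedge X)$ (valid because $X$ has cells only in a bounded range) together with the cofiber sequence splitting off $H\mathbb{Z}\wedge X$ to transfer this to $\tau_{[1,n]}S^0$. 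The essential point you are missing is that the large exponent is a phenomenon of the \emph{spectrum} $\tau_{[1,n]}S^0$, not of any one of its homotopy groups, and is detected by $K$-theory rather than by homotopy.
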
 

The upper bounds will be proved in \Cref{upperbounds} below, and the lower
bounds will be proved in \Cref{lowerbound2} and \Cref{lowerboundp}.
They include as a special case estimates on the exponents on the \emph{homotopy groups}
of $S^0$, which were classically known (and in fact our method is a refinement
of the proof of those estimates). 
Note that the exponents in the \emph{unstable} homotopy 
groups have been studied extensively, including the precise 
determination 
at odd primes \cite{CMN}, and that the method of using the Adams spectral
sequence to obtain such quantitative bounds has also been used by Henn
\cite{henn}. 

\section{Upper bounds}

\newcommand{\A}{\mathcal{A}}

Let $p$ be a prime number. Let $\A_p$ denote the mod $p$ Steenrod algebra; it
is a graded algebra. Recall that if $X$ is a spectrum, then the mod $p$
cohomology $H^*(X; \mathbb{F}_p)$ is a graded  module over $\A_p$. Our approach
to the upper bounds will be based on vanishing lines in the cohomology. 

\begin{definition} 
Given a nonnegatively graded $\A_p$-module $M$, we will say that a function $f\colon
\mathbb{Z}_{\geq 0} \to \mathbb{Z}_{\geq 0}$ is a \emph{vanishing function}
for $M$ if 
for all $s,t \in \mathbb{Z}_{\geq 0}$,
\[ \mathrm{Ext}^{s,t}_{\mathcal{A}_p}(M, \mathbb{F}_2) = 0 \quad \text{ if }   t < f(s) .  \]
Recall here that $s$ is the homological degree, and $t$ is the grading. 
\end{definition}

Our main technical result is the following: 

\begin{proposition} 
\label{vanishlinetorsion}
Suppose $X$ is a connective spectrum such that each $\pi_i(X)$ is a finite
$p$-group. Suppose the $\A_p$-module $H^*(X; \mathbb{F}_p)$
has a vanishing function $f$. 
Let $n$ be an integer and let $m$ be an integer such that $f(m) - m >
n$. Then $\exp_p( \tau_{[0, n]} X) \leq m$.
\end{proposition}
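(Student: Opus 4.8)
The plan is to exploit the Adams spectral sequence for the $p$-complete sphere acting on $X$, or more precisely the conditionally convergent spectral sequence computing $\pi_*\hom(X, X^\wedge_p)$, and to read off an exponent bound from the vanishing function $f$ together with the finiteness hypothesis. First I would reduce to the case where $X = \tau_{[0,n]}X$ is itself bounded above; by \Cref{truncationtor} the exponent of $\tau_{[0,n]}X$ divides that of $X$, so it suffices to bound $\exp_p(X)$ when $X$ has homotopy concentrated in $[0,n]$ with each group a finite $p$-group. In that case $X$ is already $p$-complete, $H^*(X;\mathbb{F}_p)$ is finitely generated in each degree and concentrated in degrees $[0,n]$, and $f$ is a vanishing function for it.

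Next I would set up the mod $p$ Adams spectral sequence
\[
E_2^{s,t} = \mathrm{Ext}^{s,t}_{\mathcal{A}_p}(H^*(X;\mathbb{F}_p), \mathbb{F}_p) \Longrightarrow \pi_{t-s}\hom(X,X),
\]
which converges because $X$ is a finite-type $p$-complete spectrum (bounded homotopy, finite $p$-groups). The class of $\mathrm{id}_X$ lives in $\pi_0\hom(X,X)$, hence is detected in filtration $s$ by a class in $E_\infty^{s,s}$ for some $s$. The key observation is that the vanishing function forces $E_2^{s,s} = 0$ once $s < f(s)$, i.e. once $f(s) - s > 0$; more to the point, the relevant input is the cohomology of $\hom(X,X)$, whose $\mathcal{A}_p$-module structure I would control via $H^*(\hom(X,X);\mathbb{F}_p) \cong H_*(X;\mathbb{F}_p)^\vee \otimes H^*(X;\mathbb{F}_p)$ — a module concentrated in degrees $[-n, n]$ — so that the vanishing function $f$ of $H^*(X)$ (shifted by the $n$ coming from the dual) yields vanishing of $\mathrm{Ext}^{s,t}$ for $t - s < f(s) - m'$ type bounds. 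The upshot I am aiming for: any element of $\pi_0\hom(X,X)$ of Adams filtration $\geq m$ must be detected above the vanishing line and hence is zero, so $\mathrm{id}_X$ has Adams filtration $< m$. Then a standard multiplicativity argument — $\mathrm{id}_X = \mathrm{id}_X \cdot \mathrm{id}_X$, and each factor after multiplying by $p$ raises filtration — shows that $p^m \mathrm{id}_X$ has filtration $\geq m$ plus the original, forcing it into the vanishing range, hence $p^m \mathrm{id}_X = 0$. Tracking constants carefully, the hypothesis $f(m) - m > n$ is exactly what makes the degree $-n \leq * \leq n$ part of $\mathrm{Ext}$ vanish in filtration $\geq m$.

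The main obstacle I anticipate is bookkeeping the two different gradings and the role of the dual: one must be careful that the vanishing function for $H^*(X;\mathbb{F}_p)$ translates correctly into a vanishing statement for $\mathrm{Ext}$ of the endomorphism module $H^*(\hom(X,X);\mathbb{F}_p)$, where the tensor factor $H_*(X)^\vee$ lives in degrees $[-n,0]$ and can only worsen the line by $n$. The cleanest way to handle this is probably to avoid $\hom(X,X)$ altogether and instead argue directly: use that $\mathrm{id}_X$ being nonzero after multiplication by $p^{m-1}$ would produce, via the Adams filtration, a nonzero permanent cycle in $\mathrm{Ext}^{s,t}_{\mathcal{A}_p}(H^*(X), H^*(X))$ with $t - s = 0$ and $s \geq m$; and then bound $t \leq n$ (since $H^*(X)$ is concentrated in $[0,n]$, any nonzero such $\mathrm{Ext}$ class with internal degree matching the top must have $t \leq n$, forcing $s = t \leq n$, contradicting... ) — here I would need the precise statement that $\mathrm{Ext}^{s,t}_{\mathcal{A}_p}(M, N) = 0$ when $t < f(s) - (\text{top degree of } N^\vee)$, which follows from $f$ being a vanishing function for $M$ by a change-of-rings / minimal resolution argument. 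Assembling these pieces, the conclusion $\exp_p(\tau_{[0,n]}X) \leq m$ follows, and the secondary obstacle is merely verifying convergence of the relevant Adams spectral sequence, which is immediate here because all spectra in sight are bounded with degreewise finite mod $p$ (co)homology.
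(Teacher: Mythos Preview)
Your core idea --- use the Adams filtration, the fact that $p^m$ has filtration $\geq m$, and the vanishing line to kill high-filtration classes in low degrees --- is the right one and is close in spirit to the paper's argument. But the execution has a genuine gap.

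The problematic step is your opening reduction. You replace $X$ by $\tau_{[0,n]}X$ and then assert that ``$f$ is a vanishing function for it.'' This is false in general: truncation changes $H^*(-;\mathbb{F}_p)$ drastically (e.g.\ compare $H^*(S^0;\mathbb{F}_p)=\mathbb{F}_p$ with $H^*(\tau_{[0,n]}S^0;\mathbb{F}_p)$), so the hypothesis on $f$ is lost. Moreover, the logic ``$\exp_p(\tau_{[0,n]}X)\mid\exp_p(X)$, so it suffices to bound $\exp_p(X)$'' is vacuous here since $\exp_p(X)$ may well be infinite (take $X=\tau_{\geq 1}S^0$). Every later step --- identifying $E_2$ for $\hom(X,X)$, shifting the vanishing line by $n$ because the target module is concentrated in $[0,n]$, etc.\ --- rests on this reduction, so the argument does not go through as written. (A smaller slip: the displayed $E_2=\mathrm{Ext}_{\mathcal{A}_p}(H^*(X),\mathbb{F}_p)$ computes $\pi_*(X)$, not $\pi_*\hom(X,X)$; you notice this but then fall back on the bounded-$X$ assumption to fix it.)

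The paper avoids all of this by never truncating $X$ until the very end. It builds the Adams tower $\cdots\to F_1X\to F_0X=X$ for the \emph{original} $X$ from a minimal resolution; minimality plus the vanishing function gives directly that $F_mX$ is $(f(m)-m)$-connective. Since each cofiber $F_{i+1}X\to F_iX$ is a wedge of shifted $H\mathbb{F}_p$'s, the cofiber of $F_mX\to X$ is annihilated by $p^m$ by iterated use of \Cref{expcofiber}. Because $f(m)-m>n$, the map $X\to\mathrm{cofib}(F_mX\to X)$ is an isomorphism on $\pi_{\leq n}$, so $\tau_{[0,n]}X\simeq\tau_{\leq n}\mathrm{cofib}(F_mX\to X)$ is annihilated by $p^m$ via \Cref{truncationtor}. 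Equivalently in your filtration language: any map $X\to\tau_{[0,n]}X$ becomes zero after multiplying by $p^m$, since $p^m$ raises Adams filtration to $\geq m$ and $[F_mX,\tau_{[0,n]}X]=0$ by connectivity --- but note this uses the tower of the \emph{untruncated} $X$.
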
 
\begin{proof} 
Choose a minimal resolution (see, e.g., \cite[Def. 9.3]{usersguide}) of
$H^*(X; \mathbb{F}_p)$ by free, graded $\A_p$-modules
\begin{equation} \label{algres} \dots \to P_1 \to P_0 \to H^*(X; \mathbb{F}_p)
\to 0.  \end{equation}
In this case, we have $\mathrm{Ext}^{s,t}(H^*(X; \mathbb{F}_p), \mathbb{F}_p)
\simeq
\hom_{\A_p}( P_s, \Sigma^t \mathbb{F}_p)$ by \cite[Prop. 9.4]{usersguide}. 
That is, the free generators of the $P_s$ give precisely a basis for
$\mathrm{Ext}^{s, \ast}( H^*(X; \mathbb{F}_p); \mathbb{F}_p)$.

We can realize the resolution \eqref{algres} topologically via an Adams
resolution (cf., e.g., \cite[\S 9.3]{usersguide}). 
That is, we can find (working by induction) a tower
of spectra,
\begin{equation} \label{res}  \xymatrix{
\vdots \ar[d]  \\
F_2 X \ar[d] \ar[r] &  R_2 \\
F_1 X \ar[d]  \ar[r] &  R_1  \\
F_0 X = X \ar[r] &  R_0
},\end{equation}
such that:
\begin{enumerate}[(a)]
\item  
Each $R_i$ is a wedge of copies of shifts of $H \mathbb{F}_p$.
\item Each triangle $F_{i+1} X \to F_i X \to R_i$ is a cofiber sequence.
\item The sequence of spectra
\[ X \to R_0 \to \Sigma R_1 \to \Sigma^2 R_2 \to \dots   \]
realizes on cohomology the complex
\eqref{algres}.

\end{enumerate}
As a result, we find inductively that 
\[ H^*(F_i X; \mathbb{F}_p) \simeq \Sigma^{-i}\mathrm{im}(P_{i}  \to P_{i-1} ).    \]
Now the graded $\mathcal{A}_p$-module $P_i$ is concentrated in degrees 
$f(i)$ and up, by hypothesis and minimality. In particular, 
it follows that 
$F_i X$ is $(f(i) - i)$-connective. 
It follows, in particular, that the map
\[ X \to \mathrm{cofib}(F_i X \to X)  \]
is an isomorphism on homotopy groups below $f(i) - i$. 

Finally, we observe that the cofiber of each $F_i X \to F_{i-1} X$ is
annihilated by $p$ as it is a wedge of shifts of $H \mathbb{F}_p$. It follows by the octahedral axiom of triangulated
categories, induction on $i$, and \Cref{expcofiber} that the cofiber of $F_i X \to F_0 X = X$ is annihilated by $p^i$. 
Taking $i = m$, we get the claim since $\tau_{\leq n}X \simeq \tau_{\leq n}( \mathrm{cofib}(F_m X \to
X))$ is therefore annihilated by $p^m$ by \Cref{truncationtor}. 
\end{proof} 

Since $\mathcal{A}_p$ is a \emph{connected} graded algebra, it follows easily
(via a minimal resolution)
that if $M$ is a connected graded $\mathcal{A}_p$-module, then
$\mathrm{Ext}^{s,t}(M, \mathbb{F}_p) = 0$ if $t < s$. 
Of course, this bound is too weak to help with \Cref{vanishlinetorsion}.
In fact, an integer $m$ satisfying the desired conditions will not 
exist if we use this bound. 

We now specialize to the case of interest.
Consider $\tau_{\geq 1} S^0= \tau_{[1, \infty]}
S^0$. It fits into a cofiber sequence 
\[  S^0 \to H \mathbb{Z}
\to \Sigma \tau_{\geq 1} S^0
  ,\]
  which leads to an exact sequence
  \[ 0 \to H^*( \Sigma \tau_{\geq 1} S^0; \mathbb{F}_p ) \to 
  H^*( H \mathbb{Z}; \mathbb{F}_p) \to H^*(S^0; \mathbb{F}_p) \to 0.
  \]
Now we know that (by the change-of-rings theorem \cite[Fact 3, p.
438]{usersguide}) $
\mathrm{Ext}_{\A_p}^{s,t}( 
 H^*( H \mathbb{Z}; \mathbb{F}_p); \mathbb{F}_p) $ vanishes unless $s = t$, and
 is one-dimensional if $s = t$; in this case it maps isomorphically to
 $\mathrm{Ext}^{s,s} _{\A_p}( \mathbb{F}_p, \mathbb{F}_p)$. 
 It follows: 
 \begin{equation} \label{dimshift} \mathrm{Ext}^{s,t}_{\A_p}(H^*(\tau_{\geq 1} S^0; \mathbb{F}_p ); \mathbb{F}_p ) = 
\begin{cases} 
 \mathrm{Ext}_{\A_p}^{s-1,t-1}(\mathbb{F}_p; \mathbb{F}_p ) & s \neq t \\
 0 & \text{if } s = t
 \end{cases} 
 \end{equation}

We will need certain classical facts, due to Adams 
\cite{adamslecture} at $p = 2$
and Liulevicius \cite{liulevicius} for $p >2$, about
vanshing lines 
in the classical Adams spectral sequence. A convenient reference is
\cite{usersguide}. 
\begin{proposition}[{\cite[Thm. 9.43]{usersguide}}]
\label{vanishline}
\begin{enumerate}[(a)]
\item 
$\mathrm{Ext}_{\A_2}^{s,t}(\mathbb{F}_2, \mathbb{F}_2) = 0$ for $0 < s < t < 3s -
3$.
\item 
$\mathrm{Ext}_{\A_p}^{s,t}(\mathbb{F}_p, \mathbb{F}_p) = 0$
for $0 < s < t < (2p-1) s - 2$.
\end{enumerate}
\end{proposition}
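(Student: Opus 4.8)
These are the classical Adams vanishing lines---due to Adams \cite{adamslecture} at $p=2$ and to Liulevicius \cite{liulevicius} at odd primes---and for the purposes of this paper we use them only as black boxes in the form of \cite[Thm.~9.43]{usersguide}; but here is the shape of the argument at $p=2$. The plan is to work in the \emph{May spectral sequence}, obtained by filtering the Steenrod algebra so that its associated graded is primitively generated. Its $E_1$-page is a polynomial algebra $\mathbb{F}_2[h_{i,j}\colon i \geq 1,\ j \geq 0]$, in which the generator $h_{i,j}$ (dual to $\xi_i^{2^j}$) has homological degree $s = 1$ and internal degree $t = 2^j(2^i - 1)$, together with an auxiliary ``May weight'' grading preserved by the differentials. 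Since $E_\infty$ is a subquotient of each $E_r$, it is enough to establish the asserted vanishing region on some finite page.

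A class of homological degree $s$ on $E_1$ is a combination of products of $s$ generators, so its internal degree is a sum of $s$ elements of the multiset $\{2^j(2^i - 1)\}$. The smallest of these is $t(h_{1,0}) = t(h_0) = 1$, and using only $h_0$ one sits precisely on the line $t = s$; the issue is whether one can get just above it. The unique generator of internal degree $2$ is $h_1 = h_{1,1}$ and the unique one of internal degree $3$ is $h_{2,0}$, so \emph{a priori} $E_1$ contains classes such as $h_0^{s-1}h_1 \in E_1^{s, s+1}$, which would violate the stated bound for $s \geq 3$. These are exactly the classes that must---and do---die: the May $d_1$ satisfies $d_1(h_{2,j}) = h_{1,j+1}h_{1,j}$, so already on $E_2$ one has $h_0 h_1 = 0$ (hence $h_0^{s-1}h_1 = 0$) and the offending generators $h_{2,j}$ have disappeared. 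One then computes enough of the May $E_2$-page---the $d_1$-cohomology of $\mathbb{F}_2[h_{i,j}]$, whose low-weight part is classical---and re-runs the degree count: with the low-weight relations now incorporated, every monomial of homological degree $s$ and positive stem either lies on the $h_0$-line $t = s$ or already satisfies $t \geq 3s - 3$. Passing to $E_\infty$ gives part (a).

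The real difficulty is this last step: controlling the May $E_2$-page accurately enough in the open wedge $s < t < 3s - 3$. This is the combinatorial heart of Adams' periodicity theorem and is where the additive constant $-3$ enters. (An alternative, which trades one computation for another, is a Cartan--Eilenberg change-of-rings argument along the sub-Hopf-algebra $\mathcal{A}(1) \subset \A_2$, using the explicit periodic structure of $\mathrm{Ext}_{\mathcal{A}(1)}(\mathbb{F}_2, \mathbb{F}_2)$ and the fact that $\A_2 /\!/ \mathcal{A}(1)$ is generated in internal degrees $\geq 4$.) The odd-primary statement (b) is proved in the same way, using Liulevicius's May spectral sequence, whose $E_1$-term is an explicit tensor product of polynomial and exterior algebras on generators of known homological and internal degree: the tower $a_0^s \in \mathrm{Ext}^{s,s}$ pins the line $t = s$, the slope of the next vanishing edge is governed by $t(\alpha_1) = 2p - 2$, and after the analogous first-differential cancellations one is left with the region $s < t < (2p-1)s - 2$.
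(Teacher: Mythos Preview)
The paper does not prove this proposition at all: it is stated as a citation to \cite[Thm.~9.43]{usersguide} and attributed to Adams and Liulevicius, with no argument given. You correctly recognize this and explicitly say you treat the result as a black box, so in that sense your proposal matches the paper exactly. Everything after your first sentence is extra exposition that the paper does not attempt.

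That extra sketch is a reasonable advertisement for the May spectral sequence, but be careful with one claim. You assert that after passing to the May $E_2$-page ``every monomial of homological degree $s$ and positive stem either lies on the $h_0$-line $t=s$ or already satisfies $t \geq 3s-3$.'' This is not true on $E_2$: for example $h_1^4 \in E_2^{4,8}$ lies strictly inside the claimed vanishing region ($8 < 3\cdot 4 - 3 = 9$), and $h_1$ is not nilpotent on the May $E_2$-page---the relation $h_1^4=0$ in $\mathrm{Ext}$ arises from a higher May differential (via $b_{20}$). So the May route genuinely requires control of differentials beyond $d_1$, not just a degree count on $E_2$. The alternative you mention, the change-of-rings argument over $\mathcal{A}(1)$ (resp.\ the analogous sub-Hopf-algebra at odd primes), is closer to how the bound is actually established in the cited references and avoids this issue; if you want to keep a sketch, that is the safer one to foreground.
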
 
Note also that $\mathrm{Ext}^{s,t}_{\A_p}(\mathbb{F}_p, \mathbb{F}_p) = 0$ for
$t < s$. As a result, one finds that the cohomology of $\tau_{\geq 1} S^0$,
when displayed with Adams indexing with $t-s$ on the $x$-axis and $s$ on the $y$-axis, vanishes
above a line with slope $\frac{1}{2p-2}$.

Finally, we can prove our upper bounds.
\begin{proposition} 
\label{upperbounds}
\begin{enumerate}[(a)]
\item 
$\exp_2( \tau_{[1, n]} S^0)  \leq
\ceil{\frac{n}{2}} + 3$.
\item 
For $p$ odd, 
$\exp_p( \tau_{[1 , n]} S^0) \leq \ceil{\frac{n + 3}{2p-2}} + 1$.
\end{enumerate}
\end{proposition}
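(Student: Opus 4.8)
The plan is to deduce both estimates from \Cref{vanishlinetorsion}, applied to the $p$-localization $X := (\tau_{\geq 1} S^0)_{(p)}$. This $X$ is connective; each $\pi_i(X) = (\pi_i S^0)_{(p)}$ is a finite $p$-group (finite by Serre's theorem, as $i \geq 1$); the $\A_p$-module $H^*(X; \mathbb{F}_p)$ agrees with $H^*(\tau_{\geq 1} S^0; \mathbb{F}_p)$ and is concentrated in positive degrees; and $\tau_{[0,n]} X \simeq (\tau_{[1,n]} S^0)_{(p)}$, so that $\exp_p(\tau_{[0,n]} X) = \exp_p(\tau_{[1,n]} S^0)$ since $\exp_p$ is insensitive to $p$-localization. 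Hence it suffices to exhibit a vanishing function $f$ for $H^*(\tau_{\geq 1} S^0; \mathbb{F}_p)$ and then, for the given $n$, an integer $m$ with $f(m) - m > n$ that is no larger than the stated bound; \Cref{vanishlinetorsion} then gives $\exp_p(\tau_{[1,n]} S^0) \leq m$.

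I would construct $f$ by feeding the Adams--Liulevicius vanishing lines \Cref{vanishline}, together with the elementary vanishing $\mathrm{Ext}^{s,t}_{\A_p}(\mathbb{F}_p, \mathbb{F}_p) = 0$ for $t < s$, through the dimension-shift identity \eqref{dimshift}. The point is that \eqref{dimshift} reads off $\mathrm{Ext}^{s,t}_{\A_p}(H^*(\tau_{\geq 1} S^0; \mathbb{F}_p), \mathbb{F}_p)$ as $\mathrm{Ext}^{s-1,t-1}_{\A_p}(\mathbb{F}_p, \mathbb{F}_p)$ for $s \neq t$ and as $0$ for $s = t$, the latter clause precisely deleting the diagonal classes (the $h_0$-tower) that the bound $t < s$ cannot reach. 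Combining this with \Cref{vanishline}, I expect to obtain vanishing of this group whenever $t < 3s - 5$ (for $p = 2$, once $s \geq 3$) and whenever $t < (2p-1)(s-1) - 1$ (for $p$ odd, once $s \geq 2$). One may then take $f(s) = 3s - 5$ (resp.\ $(2p-1)(s-1) - 1$) on that range and set $f$ to be $0$ below it; this is harmless since the integer $m$ produced next always lies in the good range.

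For $p = 2$ one then has $f(m) - m = 2m - 5$, so $f(m) - m > n$ as soon as $m \geq \floor{\frac{n+5}{2}} + 1$; a short computation shows $\floor{\frac{n+5}{2}} + 1 = \ceil{\frac{n}{2}} + 3 \geq 3$, and taking $m$ equal to this value yields part~(a). For $p$ odd, $f(m) - m = (2p-2)m - 2p$, so $f(m) - m > n$ as soon as $m \geq \floor{\frac{n+2p}{2p-2}} + 1$; since $0 < \frac{2p-3}{2p-2} < 1$ one has $\frac{n+2p}{2p-2} < \frac{n+3}{2p-2} + 1$, whence $\floor{\frac{n+2p}{2p-2}} + 1 \leq \ceil{\frac{n+3}{2p-2}} + 1$, and this value is $\geq 2$, so taking $m$ equal to it yields part~(b). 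In both cases \Cref{vanishlinetorsion} applies with the chosen $m$.

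The step I expect to demand the most care is pinning down the exact range in which the clean formula for $f$ is legitimate. The hypotheses of \Cref{vanishline} (e.g.\ $0 < s < t < 3s - 3$) describe an empty set of $(s,t)$ for small $s$, so the line $f(s) = 3s - 5$ is only guaranteed once $s$ passes a small absolute threshold; verifying that the asserted vanishing does hold all the way down to that threshold requires carefully interleaving the line from \Cref{vanishline}, the sub-diagonal bound $t < s$, and the $s = t$ clause of \eqref{dimshift}. This is only a finite low-degree check, and it is comfortably consistent with the choices of $m$ above, which grow linearly in $n$ and already satisfy $m \geq 3$ (resp.\ $m \geq 2$) for all $n \geq 0$; the remainder is the floor--ceiling bookkeeping indicated above.
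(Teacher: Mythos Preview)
Your proposal is correct and follows essentially the same route as the paper: apply \Cref{vanishlinetorsion} to $\tau_{\geq 1}S^0$, obtain the vanishing function from \eqref{dimshift} and \Cref{vanishline}, and solve $f(m)-m>n$ for the minimal $m$. Your extra care about $p$-localizing to meet the ``finite $p$-group'' hypothesis and about the small-$s$ range of the vanishing function is prudent, though in fact $f(s)=3s-5$ (resp.\ $(2p-1)s-2p$) is a valid vanishing function for all $s\geq 0$ once one combines the sub-diagonal vanishing and the $s=t$ clause of \eqref{dimshift}, since for small $s$ the condition $t<f(s)$ is vacuous or handled by those cases; and your floor--ceiling inequality in (b) is in fact an equality via $\ceil{a/q}=\floor{(a+q-1)/q}$.
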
 
\begin{proof}

This is now a consequence of the preceding discussion. We just need to put
things together.

At the prime 2, it follows 
from \Cref{vanishline} and \eqref{dimshift}
that the $\A_2$-module 
$H^*(\tau_{\geq 1} S^0; \mathbb{F}_2 )$ has vanishing function $f(s) = 3s - 5$. 
By \Cref{vanishlinetorsion}, it follows that if $2m - 5 > n$, then $\exp_2(
\tau_{[1, n]}S^0) \leq m$. Choosing $m  = \ceil{\frac{n}{2}} + 3$ gives the
minimal choice. 

At an odd prime, one similarly sees (by \Cref{vanishline} and
\eqref{dimshift}) that $f(s) = (2p-1)s - 2p$ is a vanishing
function. That is, if $(2p-2) m - 2p >
n$, then we have $\exp_p( \tau_{[1, n]} S^0) \leq m$. Rearranging gives the
desired claim. 
\end{proof}

\section{Lower bounds}

The purpose of this section is to prove the lower bounds of \Cref{maintheorem}.
The proof of the lower bounds is completely different from the proof of the
upper bounds. 
Namely, we will write down finite complexes that have homology annihilated by
$p$ but for which the $p$-exponent grows linearly. These complexes are simply
the skeleta of $B \mathbb{Z}/p$. We will show, however, that the
$p$-exponent of the \emph{spectra} grows linearly by 
looking at the complex $K$-theory. 
First, we need a lemma.

\begin{lemma} \label{sufficestotruncate}
Let $X$ be a finite torsion complex with cells in degrees $0$ through $m$.
Then, 
for each $p$,
$\exp_p(X) = \exp_p( \tau_{[0, m]} S^0 \wedge X)$.
\end{lemma}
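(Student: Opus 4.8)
The plan is to reduce the computation of $\exp_p(X)$ for a finite torsion complex $X$ with cells in degrees $0$ through $m$ to that of the truncated sphere $\tau_{[0,m]}S^0$ smashed with $X$. The key point is that, since $X$ has cells only in degrees $[0,m]$, the unit map $S^0 \to X$ factors, after smashing with $X$, through the truncation, so that the natural map $\tau_{[0,m]}S^0 \wedge X \to S^0 \wedge X = X$ is already an equivalence. More precisely, $X$ is $0$-connective, and since $X$ has top cell in dimension $m$ the map $X \simeq S^0 \wedge X \to \tau_{\leq m}S^0 \wedge X$ need not itself be an equivalence, so one should instead argue as follows: $S^0 \wedge X \to \tau_{[0,m]}S^0 \wedge X$ — no, the cleanest route is to observe that $\tau_{[0,m]}S^0 \wedge X$ receives a map from $\tau_{\geq 0}S^0 \wedge X = X$ (killing homotopy above $m$ in the sphere factor) and that this map is an isomorphism on homotopy in the relevant range.

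Concretely, first I would set up the cofiber sequences. Write $S^0 \to \tau_{\leq m}S^0$ with fiber $\tau_{\geq m+1}S^0$, which is $m$-connective (indeed $(m+1)$-connective after the shift), and write $\tau_{\leq m}S^0 \to \tau_{[0,m]}S^0 = \tau_{\geq 0}\tau_{\leq m}S^0$; since $S^0$ is already connective, $\tau_{[0,m]}S^0 = \tau_{\leq m}S^0$. Smashing the cofiber sequence $\tau_{\geq m+1}S^0 \to S^0 \to \tau_{\leq m}S^0$ with the $0$-connective spectrum $X$ yields a cofiber sequence in which the first term $\tau_{\geq m+1}S^0 \wedge X$ is $(m+1)$-connective. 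Hence the map $X \to \tau_{[0,m]}S^0 \wedge X$ is an isomorphism on $\pi_i$ for $i \leq m$. But both $X$ and $\tau_{[0,m]}S^0\wedge X$ have homotopy concentrated in degrees $\leq m$: for $X$ this is because $X$ is a finite complex with top cell in dimension $m$, hence $m$-truncated; for $\tau_{[0,m]}S^0 \wedge X$ one uses that smashing an $m$-truncated spectrum with a $0$-connective one keeps the result $m$-truncated. Wait — $\tau_{[0,m]}S^0$ is $m$-truncated and $X$ is $0$-connective, so $\tau_{[0,m]}S^0 \wedge X$ is $m$-truncated. Therefore the map $X \to \tau_{[0,m]}S^0\wedge X$ is an equivalence, and in particular $\exp_p(X) = \exp_p(\tau_{[0,m]}S^0 \wedge X)$.

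The main obstacle — really the only subtlety — is justifying that $X$ itself is $m$-truncated, i.e.\ that a finite complex built from cells in dimensions $0$ through $m$ has vanishing homotopy above degree $m$. This is \emph{false} in general (finite complexes have homotopy in arbitrarily high degrees!), so this naive approach does not work, and I must be more careful: the claim $\exp_p(X) = \exp_p(\tau_{[0,m]}S^0 \wedge X)$ cannot come from an equivalence $X \simeq \tau_{[0,m]}S^0 \wedge X$. Instead the correct argument is a two-sided divisibility. On one hand, $\tau_{[0,m]}S^0 \wedge X$ is a retract-free consequence: the cofiber of $\tau_{\geq m+1}S^0 \wedge X \to X \to \tau_{[0,m]}S^0\wedge X$ has the first term $(m+1)$-connective. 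Since $X$ has cells in $[0,m]$, its cellular filtration shows $\tau_{\geq m+1}S^0 \wedge X$ is built from $\tau_{\geq m+1}S^0$-shifted cells in degrees $\geq m+1$; more usefully, I claim $\exp_p(\tau_{[0,m]}S^0 \wedge X) \mid \exp_p(X)$ directly from \Cref{smashexponent} applied with the roles — no.

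Let me restate the plan honestly. I would prove the two inequalities separately. For $\exp_p(X) \mid \exp_p(\tau_{[0,m]}S^0\wedge X)$: the cofiber sequence $\tau_{\geq m+1}S^0 \wedge X \to X \to \tau_{[0,m]}S^0\wedge X$ shows that the map $X \to \tau_{[0,m]}S^0\wedge X$ induces an isomorphism on $\pi_i$ for $i \le m$ and a surjection on $\pi_{m+1}$; but $X$, having top cell in dimension $m$, has $\pi_i(X) = 0$ for $i < 0$ and — here is the real input — one uses \Cref{firstbound} or rather the fact that we only care about the \emph{exponent} of $X$ as computed via a cell filtration: since $X$ has a finite cell filtration with $m+1$ cells-worth of layers each a shift of $S^0$ by something in $[0,m]$, smashing with any spectrum preserves this and we get $\exp_p(X) \le \prod$ of exponents, but more precisely the identity of $X$ is detected. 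The honest and short argument, which I expect the author uses: apply $- \wedge X$ to the cofiber sequence $S^0 \to H\mathbb Z$-type Postnikov data, or simply note $X = \tau_{[0,m]}S^0 \wedge X$ is genuinely an equivalence \emph{when $X$ is $m$-truncated}, and for general finite $X$ with cells in $[0,m]$ one replaces $X$ by $\tau_{\le m}X$ first, observing $\exp_p(X)$ here refers to the spectrum which — given the lemma's phrasing in context (these are skeleta of $B\mathbb Z/p$, used via $K$-theory) — is intended with $X$ understood through its $m$-skeletal cell structure; in the cofiber sequence $\tau_{\geq m+1}S^0\wedge X \to X \to \tau_{[0,m]}S^0 \wedge X$ the term $\tau_{\geq m+1}S^0 \wedge X$ is $(m+1)$-connected \emph{and $X$ built from cells in $[0,m]$ forces it to receive no maps that matter}, giving that $\mathrm{id}_X$ pulls back, so $\exp_p(X) \le \exp_p(\tau_{[0,m]}S^0\wedge X)$; conversely $\exp_p(\tau_{[0,m]}S^0\wedge X) \mid \exp_p(S^0 \wedge X) \cdot \exp_p(\text{nothing})$ — in fact by \Cref{truncationtor} applied after smashing, $\exp_p(\tau_{[0,m]}(S^0 \wedge X)) \mid \exp_p(S^0 \wedge X) = \exp_p(X)$, together with $\tau_{[0,m]}S^0 \wedge X \simeq \tau_{[0,m]}(S^0\wedge X)$ for $X$ connective, closing the loop. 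The one genuinely delicate step is the first inequality, and I would handle it by the connectivity estimate on $\tau_{\ge m+1}S^0 \wedge X$ combined with the observation that $X$ is a finite complex whose cells lie in $[0,m]$, so that $[X, \tau_{\geq m+1}S^0 \wedge X]$ receives enough vanishing to lift $\mathrm{id}_X$.
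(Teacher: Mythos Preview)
Your proposal eventually lands on the paper's argument for the harder inequality $\exp_p(X) \leq \exp_p(\tau_{[0,m]}S^0 \wedge X)$: from the cofiber sequence $\tau_{\geq m+1}S^0 \wedge X \to X \to \tau_{[0,m]}S^0 \wedge X$, the fiber is $(m+1)$-connective while $X$ has cells only in $[0,m]$, so $[X,\tau_{\geq m+1}S^0\wedge X]=0$ and hence $[X,X]\hookrightarrow [X,\tau_{[0,m]}S^0\wedge X]$. The identity therefore survives with its full order, and since $[X,\tau_{[0,m]}S^0\wedge X]$ is annihilated by $\exp(\tau_{[0,m]}S^0\wedge X)$ (functoriality in the target, \Cref{cohomfunctor}), the inequality follows. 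This is exactly what the paper does; your phrasing ``lift $\mathrm{id}_X$'' is backwards (nothing is being lifted --- the identity is being pushed forward injectively), but the mathematics is right.

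For the easier inequality $\exp_p(\tau_{[0,m]}S^0\wedge X) \leq \exp_p(X)$, however, your final argument is wrong. You claim $\tau_{[0,m]}S^0 \wedge X \simeq \tau_{[0,m]}(S^0\wedge X)=\tau_{\leq m}X$ for connective $X$, but this is false: take $m=0$, so that the left side is $H\mathbb{Z}\wedge X$ while the right side is $H\pi_0(X)$, and these differ as soon as $X$ has higher homology. Truncation does not commute with smash products in this way. The correct argument is the one you wrote down and then discarded: \Cref{smashexponent} gives $\exp_p(\tau_{[0,m]}S^0\wedge X) \mid \exp_p(X)$ immediately, since multiplication by $p^{\exp_p(X)}$ is zero on $X$ and hence on anything smashed with $X$. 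That is precisely the one-line justification the paper uses.
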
 
\begin{proof} 
Without loss of generality, $X$ is $p$-local. 
We know that $\exp_p(X) \geq 
\exp_p( \tau_{[0, m]} S^0 \wedge X)$ (\Cref{smashexponent}).  Thus, we need to prove the other 
inequality. Let $k = \exp_p(X)$.

Let $\hom(X, X)$ denote the endomorphism ring spectrum of $X$.
The identity map $X \to X$ defines a class in $\pi_0 \hom(X, X)$, which maps
isomorphically to
$\pi_0 \hom(X, \tau_{[0, m]} S^0 \wedge X)$ by the hypothesis on the cells of
$X$. 
Therefore, there exists a class in 
$\pi_0\hom(X, \tau_{[0, m]} S^0 \wedge X)$
of order exactly $p^k$. It follows that 
$\exp_p(\tau_{[0, m]} S^0 \wedge X ) \geq k$ as desired. 
\end{proof}

We are now ready to prove our lower bound at the prime two. 
\begin{proposition} 
\label{lowerbound2}
We have $\mathrm{exp}_2(\tau_{[1, n ] } S^0) \geq  
\floor{(n-1)/2} $.
\end{proposition}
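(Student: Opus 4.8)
The plan is to produce finite complexes whose homology is annihilated by $2$ but whose $2$-exponent as \emph{spectra} grows linearly; as indicated above, these are the even skeleta of $B\mathbb{Z}/2 = \mathbb{RP}^\infty$. Concretely, for $r \geq 1$ I would set $Y_r = \Sigma^\infty \mathbb{RP}^{2r}$, a finite spectrum with cells in degrees $1, \dots, 2r$. Its reduced integral homology is $\mathbb{Z}/2$ in each odd degree $\leq 2r - 1$ and zero otherwise, so $Y_r$ is a finite torsion complex whose homology is annihilated by $2$; the content of the proof is that $\exp_2(Y_r) \geq r$ regardless.

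The first step is a $K$-theory lower bound for $\exp_2(Y_r)$. I would invoke the classical computation of Adams (see also Atiyah's book) that the reduced complex $K$-group $\widetilde{K}^0(\mathbb{RP}^{2r})$ is cyclic of order $2^r$, generated by $[\lambda_{\mathbb{C}}] - 1$, where $\lambda$ is the tautological real line bundle. Since $\widetilde{K}^0(-)$ (equivalently, complex $K$-homology) is a (co)homological functor to abelian groups, \Cref{cohomfunctor} forces $2^{\exp_2(Y_r)}$ to annihilate $\widetilde{K}^0(\mathbb{RP}^{2r}) \cong \mathbb{Z}/2^r$, whence $\exp_2(Y_r) \geq r$.

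Next I would transfer this bound to the Postnikov sections of the sphere. Desuspending once, $\Sigma^{-1} Y_r$ is a finite torsion complex with cells in degrees $0$ through $2r-1$ and $\exp_2(\Sigma^{-1} Y_r) = \exp_2(Y_r) \geq r$, so \Cref{sufficestotruncate} (with $m = 2r-1$) gives $\exp_2(Y_r) = \exp_2(\tau_{[0, 2r-1]} S^0 \wedge \Sigma^{-1} Y_r)$. Now smash the connectivity cofiber sequence $\tau_{[1, 2r-1]} S^0 \to \tau_{[0, 2r-1]} S^0 \to H\mathbb{Z}$ with $\Sigma^{-1} Y_r$. Since the reduced cellular chain complex of $\mathbb{RP}^{2r}$ splits as a direct sum of complexes $(\mathbb{Z} \xrightarrow{2} \mathbb{Z})$, the $H\mathbb{Z}$-module $H\mathbb{Z} \wedge \Sigma^{-1} Y_r$ is a wedge of shifts of $H\mathbb{F}_2$, hence annihilated by $2$; therefore \Cref{expcofiber} and \Cref{smashexponent} give
\[ \exp_2\bigl(\tau_{[0,2r-1]} S^0 \wedge \Sigma^{-1} Y_r\bigr) \leq \exp_2\bigl(\tau_{[1,2r-1]} S^0 \wedge \Sigma^{-1} Y_r\bigr) + 1 \leq \exp_2(\tau_{[1,2r-1]} S^0) + 1. \]
Combining the displays yields $\exp_2(\tau_{[1, 2r-1]} S^0) \geq r - 1$. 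For $n = 2r - 1$ odd this is the claim, since $r - 1 = \floor{(n-1)/2}$; for $n = 2r$ even I would use $\tau_{[1,2r-1]} S^0 \simeq \tau_{\leq 2r-1} \tau_{[1, 2r]} S^0$ together with \Cref{truncationtor} to get $\exp_2(\tau_{[1,2r]} S^0) \geq \exp_2(\tau_{[1,2r-1]} S^0) \geq r - 1 = \floor{(n-1)/2}$.

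The only genuine input is the $K$-theory computation of the first step; the remainder is formal manipulation of cofiber sequences and truncations. The point needing care is the passage from $\tau_{[0, 2r-1]} S^0$ to $\tau_{[1, 2r-1]} S^0$ in the third step: one cannot quotient by $H\mathbb{Z}$ directly, since $H\mathbb{Z}$ has infinite exponent, so it is essential that this be done only after smashing with the torsion complex $\Sigma^{-1} Y_r$, at which point $H\mathbb{Z} \wedge \Sigma^{-1} Y_r$ becomes honestly $2$-torsion and contributes only a bounded error term.
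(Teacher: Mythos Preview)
Your proof is correct and follows essentially the same route as the paper's: use $\widetilde{K}^0(\mathbb{RP}^{2r}) \cong \mathbb{Z}/2^r$ to get $\exp_2(\Sigma^\infty \mathbb{RP}^{2r}) \geq r$, invoke \Cref{sufficestotruncate} to pass to $\tau_{[0,2r-1]}S^0 \wedge \Sigma^\infty \mathbb{RP}^{2r}$, and then peel off $H\mathbb{Z}\wedge \Sigma^\infty \mathbb{RP}^{2r}$ (which is $2$-torsion) via \Cref{expcofiber}. The only cosmetic difference is that you desuspend explicitly to match the hypotheses of \Cref{sufficestotruncate} verbatim and treat the even case at the end, whereas the paper reduces to odd $n$ at the outset via monotonicity and applies the lemma directly to the range $[1,2r]$.
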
 
\begin{proof} 
Since the function 
$n \mapsto \mathrm{exp}_2(\tau_{[1, n ] } S^0)$ is increasing in $n$
(\Cref{truncationtor}), it suffices to assume $n = 2r-1$ is odd.
Consider the space $\mathbb{RP}^{2r}, r \in \mathbb{Z}_{> 0}$ and its reduced
suspension spectrum $\Sigma^\infty \mathbb{RP}^{2r}$, which is 2-power torsion. We know that $\widetilde{K}^0( \mathbb{RP}^{2r}) \simeq \mathbb{Z}/2^{r}$ by
\cite[Prop. 2.7.7]{AtiyahKthy}. It follows that (cf. \Cref{cohomfunctor}) 
\begin{equation} \label{expRP} \exp_2( \Sigma^\infty
\mathbb{RP}^{2r}) \geq r. \end{equation}
Now $\Sigma^\infty \mathbb{RP}^{2r}$ has cells in degrees $1$ to $2r$.
By \Cref{sufficestotruncate}, 
$\exp_2( \tau_{[0, 2r-1]} S^0 \wedge \Sigma^\infty
\mathbb{RP}^{2r}) \geq r$ too.

We have a
cofiber sequence
\[ \tau_{[1, 2r-1]}S^0 \wedge  \Sigma^\infty\mathbb{RP}^{2r} \to 
\tau_{[0, 2r-1]} S^0  \wedge
\Sigma^\infty \mathbb{RP}^{2r} \to H \mathbb{Z} \wedge \Sigma^\infty \mathbb{RP}^{2r}.
\]
The integral homology of $\Sigma^\infty \mathbb{RP}^{2r}$ is annihilated by $2$, so that the
$H \mathbb{Z}$-module spectrum 
$H \mathbb{Z} \wedge \mathbb{RP}^{2r}$ is a wedge of copies of $H \mathbb{Z}/2$
and is thus annihilated by $2$. 
It therefore follows from this cofiber sequence and \Cref{expcofiber} that 
\[ \exp_2( \tau_{[1, 2r-1]}S^0 \wedge  \Sigma^\infty\mathbb{RP}^{2r} ) \geq r-1,  \]
so that 
$\exp_2( \tau_{[1, 2r-1]}S^0 ) \geq r-1$ as well (in view
of \Cref{smashexponent}). 

\end{proof}

Let $p$ be an odd prime. 
We will now give the analogous argument in this case.
\begin{proposition} 
\label{lowerboundp}
We have 
$\exp_p( \tau_{[1, n]} S^0) \geq \floor{ \frac{n - 1}{2p-2}} $.
\end{proposition}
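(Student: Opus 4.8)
The plan is to mirror the proof of \Cref{lowerbound2}, replacing the stunted real projective spaces by skeleta of $B\mathbb{Z}/p$ and using complex $K$-theory to detect a large $p$-exponent. Fix $M \geq 1$ and let $X = \Sigma^\infty (B\mathbb{Z}/p)^{(2M)}$ be the reduced suspension spectrum of the $2M$-skeleton of $B\mathbb{Z}/p$. It is a finite complex with cells in degrees $1$ through $2M$ whose integral homology is a direct sum of copies of $\mathbb{Z}/p$, concentrated in the odd degrees $1,3,\dots,2M-1$; in particular each $\pi_i(X)$ is a finite $p$-group. The key classical input --- the analogue of the identity $\widetilde{K}^0(\mathbb{RP}^{2r}) \simeq \mathbb{Z}/2^{r}$ used in \Cref{lowerbound2} --- is that $\widetilde{K}^0(X)$ is a finite abelian $p$-group generated by $\sigma = L-1$, for $L$ the tautological line bundle, with $\exp_p(\widetilde{K}^0(X)) = \ceil{M/(p-1)}$. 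Granting this, \Cref{cohomfunctor} gives $\exp_p(X) \geq \ceil{M/(p-1)}$.

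Next I would run the argument of \Cref{lowerbound2} essentially word for word. By \Cref{sufficestotruncate} --- and, just as there, one may truncate at degree $2M-1$ rather than $2M$, since the fiber $(\tau_{\geq 2M} S^0) \wedge X$ appearing in the proof of that lemma is $(2M+1)$-connective because $X$ is $1$-connective --- we obtain $\exp_p(\tau_{[0,2M-1]} S^0 \wedge X) \geq \ceil{M/(p-1)}$. Smashing the cofiber sequence $\tau_{[1,2M-1]} S^0 \to \tau_{[0,2M-1]} S^0 \to H\mathbb{Z}$ with $X$ produces a cofiber sequence whose third term $H\mathbb{Z} \wedge X$ is a wedge of shifts of $H\mathbb{Z}/p$ --- every $H\mathbb{Z}$-module spectrum splits as a wedge of Eilenberg--MacLane spectra, since $\mathbb{Z}$ has global dimension one --- and is therefore annihilated by $p$. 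Hence \Cref{expcofiber} yields $\exp_p(\tau_{[1,2M-1]} S^0 \wedge X) \geq \ceil{M/(p-1)} - 1$, and then $\exp_p(\tau_{[1,2M-1]} S^0) \geq \ceil{M/(p-1)} - 1$ by \Cref{smashexponent}.

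It remains to match this with the stated inequality. Since $n \mapsto \exp_p(\tau_{[1,n]} S^0)$ is non-decreasing (\Cref{truncationtor}) and $2p-2$ is even, it is enough to treat odd $n$, the even case following by applying the odd case to $n-1$. Writing $n = 2M-1$ and using the elementary identity $\ceil{a/b} = \floor{(a-1)/b} + 1$,
\[
\ceil{M/(p-1)} - 1 \;=\; \ceil{\tfrac{n+1}{2p-2}} - 1 \;=\; \floor{\tfrac{n}{2p-2}} \;\geq\; \floor{\tfrac{n-1}{2p-2}},
\]
which is exactly what is claimed. The one substantive ingredient --- and the step I expect to be the main obstacle --- is the $K$-theoretic lower bound $\sigma^M \neq 0$ in $\widetilde{K}^0(X)$. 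This rests on the collapse of the Atiyah--Hirzebruch spectral sequence for $K^*(B\mathbb{Z}/p)$ (immediate at an odd prime, for degree reasons) together with the fact that $\sigma = L-1$ detects a generator of $\widetilde{H}^2(B\mathbb{Z}/p;\mathbb{Z})$; the relation $(1+\sigma)^p = 1$ then forces $\sigma^p = -p\sigma u$ for a unit $u$, hence $p^{k}\sigma = \pm(\text{unit})\cdot\sigma^{1+k(p-1)}$, which pins down the exponent. Everything else parallels the case $p = 2$.
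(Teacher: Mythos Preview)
Your proposal is correct and follows essentially the same strategy as the paper: skeleta of a classifying space, complex $K$-theory to force a large exponent, then the cofiber sequence and \Cref{expcofiber}/\Cref{smashexponent} to pass from $\tau_{[0,\,-]}$ to $\tau_{[1,\,-]}$.

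The one genuine difference is the choice of space. You work with the $2M$-skeleton of $B\mathbb{Z}/p$, whereas the paper replaces this by (the $p$-localization of) $B\Sigma_p$, whose cells lie only in degrees $\equiv 0,-1 \pmod{2p-2}$. The payoff of the paper's choice is that the skeleton $Y_k$ has cohomology $\mathbb{Z}/p$ only in degrees $2p-2,\,2(2p-2),\,\dots,\,k(2p-2)$, so the Atiyah--Hirzebruch spectral sequence together with the map to $\Sigma^\infty B\Sigma_p$ immediately gives $\widetilde{K}^0(Y_k)\simeq\mathbb{Z}/p^k$; the exponent is read off without any further algebra. Your route through $B\mathbb{Z}/p$ yields a non-cyclic $\widetilde{K}^0$ and therefore requires the extra argument you sketch --- showing $\sigma^M\neq 0$ via the multiplicative structure on the collapsed AHSS and then iterating $\sigma^p=-p\sigma u$ to get $p^k\sigma=\pm u^{-k}\sigma^{1+k(p-1)}$, hence $\exp_p\widetilde{K}^0(X)=\lceil M/(p-1)\rceil$. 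That argument is fine (and classical for lens spaces); it is just slightly more work than the paper's. Conversely, your arithmetic at the end in fact yields $\exp_p(\tau_{[1,n]}S^0)\geq \lfloor n/(2p-2)\rfloor$, which is marginally sharper than the stated $\lfloor (n-1)/(2p-2)\rfloor$.
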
 
\begin{proof}
For simplicity, we will work with $B \Sigma_p$ (which implicitly will be
$p$-localized) rather
than $B \mathbb{Z}/p$. The $p$-local homology of $B \Sigma_p$ is well-known
(see \cite[Lem. 1.4]{maysteenrod} for the mod $p$ homology from which this
can be derived, together with the absence of higher Bocksteins): one has 
\[ H_i(B \Sigma_p; \mathbb{Z}_{(p)})  \simeq 
\begin{cases} 
\mathbb{Z}_{(p)} & i = 0 \\
\mathbb{Z}/p &  i = k(2p-2) - 1 , \ k > 0 \\
0 & \text{otherwise}
 \end{cases}. 
\]
One can thus build a cell decomposition of the (reduced) suspension spectrum
$\Sigma^\infty B \Sigma_p$ with cells in
degrees $\equiv 0, -1 \mod (2p-2)$ starting in degree $2p-1$. 

Let $k  > 0$, and consider the
$((2p-2)k) $-skeleton of this complex. We obtain a finite $p$-torsion spectrum $Y_k$
equipped with a map 
\[ Y_k \to \Sigma^\infty B \Sigma_p  \]
inducing an isomorphism in $H_*(\cdot; \mathbb{Z}_{(p)})$ up to and
including degree
$k(2p-2)$. 
That is, by universal coefficients, $H^i(Y_k; \mathbb{Z}_{(p)}) \simeq \mathbb{Z}/p$
if $i = 2p-2, 2(2p-2), \dots, k(2p-2)$ and is zero otherwise.

We now claim 
\begin{equation}\label{K0Yk} K^0(Y_k) \simeq \mathbb{Z}/p^k . \end{equation}
In order to see this, we use the Atiyah-Hirzebruch spectral sequence (AHSS) $$H^*(Y_k;
\mathbb{Z}) \implies K^*(Y_k).$$ Since the cohomology of $Y_k$ is concentrated
in even degrees, the AHSS degenerates and we find that $K^0(Y_k)$ is a
finite $p$-group of length $k$. However, the extension problems are
solved by naturality with the map $Y_k \to \Sigma^\infty B \Sigma_{p}$, as
$\widetilde{K}^0( B \Sigma_p) \simeq \mathbb{Z}_p$ after $p$-adic completion.

Now $Y_k$ is  a finite spectrum with cells in degrees
$[(2p-2)-1, (2p-2) k ]$. 
Let $m = (2p-2)(k-1) + 1$. 
Then we have, by \Cref{sufficestotruncate} and \eqref{K0Yk},
\begin{equation} \label{expY_k} \exp_p( Y_k) = \exp_p( \tau_{[0, 
m]
} S^0 \wedge Y_k) \geq k.\end{equation}
Finally, $\exp_p( H \mathbb{Z} \wedge Y_k) = 1$ since the $p$-local homology
of $Y_k$ is annihilated by $p$. 
It follows that 
$\exp_p( \tau_{[1, 
m]} S^0) \geq k-1$, which 
is the estimate we wanted if we choose $k$  as large as possible so that $m = (2p-2) (k-1) + 1 \leq
n$. 
\end{proof}

\begin{remark} 
In view of the Kahn-Priddy theorem \cite{KPthm}, it is not surprising that the skeleta of
classifying spaces of symmetric groups should yield strong lower bounds for torsion in the
Postnikov sections of the sphere. 
\end{remark} 

\section{The Hurewicz map}

We next apply our results about the Postnikov sections
$\tau_{[1, m]} S^0$ to the original question of understanding the exponents in
the Hurewicz map. 
Let $Y$ be a connective spectrum. Then the Hurewicz map is realized as the map
in homotopy groups 
induced by the map of spectra
\[ Y  \wedge S^0 \to Y \wedge H \mathbb{Z},  \]
whose fiber is $Y \wedge \tau_{[1, \infty]} S^0$. 
As a result of the long exact sequence in homotopy, we find:

\begin{proposition} 
\label{Hurconnect}
Let $Y$ be any connective spectrum.
\begin{enumerate}[(a)]
\item 
Suppose $\tau_{[1, n]} S^0$ is 
 annihilated by $N$ for some $N > 0$. Then 
any element $x$ in the kernel of the Hurewicz map $\pi_n(Y) \to H_n(Y; \mathbb{Z})$
satisfies $Nx = 0$.
\item 
Suppose $\tau_{[1, n-1]} S^0$ is 
annihilated by $N'$ for some $N' > 0$. Then 
for any element $y \in  H_n(Y; \mathbb{Z})$, $N'y $ is in the image of the
Hurewicz map.
\end{enumerate}
\end{proposition}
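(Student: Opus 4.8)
The plan is to exploit the cofiber sequence of spectra $Y \wedge \tau_{[1,\infty]} S^0 \to Y \to Y \wedge H\mathbb{Z}$, which identifies $Y \wedge \tau_{[1,\infty]} S^0$ as the fiber of the Hurewicz map, and then to localize attention to degree $n$ by truncating. Concretely, first I would set $F = Y \wedge \tau_{[1,\infty]} S^0$ and record the long exact sequence in homotopy
\[
\pi_{n+1}(Y \wedge H\mathbb{Z}) \to \pi_n(F) \to \pi_n(Y) \xrightarrow{h} \pi_n(Y \wedge H\mathbb{Z}) = H_n(Y;\mathbb{Z}) \to \pi_{n-1}(F).
\]
From this sequence the kernel of $h$ is a quotient of $\pi_n(F)$ and the cokernel of $h$ injects into $\pi_{n-1}(F)$, so it suffices to bound the exponents of $\pi_n(F)$ and $\pi_{n-1}(F)$.

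For part (a): because $Y$ is connective, $\tau_{[1,\infty]} S^0$ is $1$-connective and the smash product only raises connectivity, the homotopy group $\pi_n(F)$ depends only on the Postnikov truncation of $\tau_{[1,\infty]} S^0$ in degrees $\le n$. More precisely, the map $\tau_{[1,\infty]} S^0 \to \tau_{[1,n]} S^0$ smashed with $Y$ induces an isomorphism on $\pi_n$ (its fiber is $Y \wedge \tau_{[n+1,\infty]} S^0$, which is $(n+1)$-connective since $Y$ is connective), so $\pi_n(F) \cong \pi_n(Y \wedge \tau_{[1,n]} S^0)$. If $\tau_{[1,n]} S^0$ is annihilated by $N$, then so is $Y \wedge \tau_{[1,n]} S^0$ by \Cref{smashexponent}, hence $\pi_n(Y \wedge \tau_{[1,n]} S^0)$ has exponent dividing $N$ by \Cref{cohomfunctor}; therefore $\pi_n(F)$, and with it $\ker h$, is annihilated by $N$, which gives $Nx = 0$ for $x \in \ker h$.

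For part (b): by the same reasoning $\pi_{n-1}(F) \cong \pi_{n-1}(Y \wedge \tau_{[1,n-1]} S^0)$, since the fiber $Y \wedge \tau_{[n,\infty]} S^0$ of $Y \wedge \tau_{[1,\infty]}S^0 \to Y \wedge \tau_{[1,n-1]}S^0$ is $n$-connective. If $\tau_{[1,n-1]} S^0$ is annihilated by $N'$, then $\pi_{n-1}(F)$ has exponent dividing $N'$ by \Cref{smashexponent} and \Cref{cohomfunctor}, and since $\mathrm{coker}\, h$ embeds in $\pi_{n-1}(F)$, multiplication by $N'$ kills it; that is, $N' y$ lies in the image of $h$ for every $y \in H_n(Y;\mathbb{Z})$. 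The only point needing a little care — and the closest thing to an obstacle — is the connectivity bookkeeping that lets us replace $\tau_{[1,\infty]} S^0$ by its finite truncation without changing the relevant homotopy group; but this is immediate from the fact that smashing a connective spectrum with an $N$-connective spectrum yields an $N$-connective spectrum, so there is really nothing deep here once the cofiber sequence is in place.
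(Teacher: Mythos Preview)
Your proposal is correct and follows essentially the same argument as the paper: both use the fiber sequence $Y \wedge \tau_{[1,\infty]}S^0 \to Y \to Y \wedge H\mathbb{Z}$, identify $\pi_n(F)$ and $\pi_{n-1}(F)$ with $\pi_n(Y \wedge \tau_{[1,n]}S^0)$ and $\pi_{n-1}(Y \wedge \tau_{[1,n-1]}S^0)$ via the connectivity argument, and invoke \Cref{cohomfunctor}. If anything you are slightly more explicit than the paper, spelling out the connectivity of the fiber and citing \Cref{smashexponent} where the paper leaves it implicit.
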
 

The homotopy groups of $X \wedge \tau_{\geq 1} S^0$ are classically denoted
$\Gamma_i(X)$ (and called Whitehead's $\Gamma$-groups).
The following argument also appears in, for example,  \cite[Th. 6.6]{ArlALGK},
\cite[Cor. 4.6]{Scherer},
and \cite{beilinson}.
\begin{proof} 
For the first claim, consider the fiber sequence $Y \wedge 
\tau_{[1, \infty]} S^0 \to Y \to Y \wedge H \mathbb{Z}$. Any element $x \in
\pi_n(Y)$ in the kernel of the Hurewicz map lifts to an element $x' \in \pi_n(Y \wedge 
\tau_{[1, \infty]} S^0)$. It suffices to show that $Nx' = 0$. But we have an
isomorphism
\[ \pi_n(Y \wedge \tau_{[1, \infty]} S^0) \simeq \pi_n ( Y \wedge \tau_{[1,
n]} S^0), \]
and the latter group is annihilated by $N$ by hypothesis (and
\Cref{cohomfunctor}), so that $Nx' = 0$ as
desired. 

Now fix $y \in H_n(Y; \mathbb{Z})$. In order to show that $N'y$ belongs to the
image of the Hurewicz map, it suffices to show that it maps to zero via the
connective homomorphism into $\pi_{n-1}( Y \wedge \tau_{[1, \infty]} S^0)$. But
we have an isomorphism 
$\pi_{n-1}( Y \wedge \tau_{[1, \infty]} S^0) 
\simeq 
\pi_{n-1}( Y \wedge \tau_{[1, n-1]} S^0)
$
and this latter group is annihilated by $N'$. 
\end{proof} 

\begin{remark} 
One has an evident $p$-local version of \Cref{Hurconnect}
for $p$-local spectra if one works instead with $\tau_{[1, n]} S^0_{(p)}$.

\end{remark} 

\begin{proof}[Proof of \Cref{ourHurbound}] 
The main result on exponents follows now by combining \Cref{Hurconnect} and our
upper bound estimates in \Cref{maintheorem}.
\end{proof} 
It remains to show that the bound is close to being the best possible. 
This will follow by re-examining our arguments for the lower bounds.

\begin{proof}[Proof of \Cref{hurbestpossible}] 

We start with the prime $2$. 
For this, we use the space $\mathbb{RP}^{2k}$ and form the endomorphism ring
spectrum $Z = \hom(\Sigma^\infty\mathbb{RP}^{2k}, \Sigma^\infty\mathbb{RP}^{2k})
\simeq \Sigma^\infty\mathbb{RP}^k \wedge \mathbb{D} ( \Sigma^\infty
\mathbb{RP}^{2k})$ where $\mathbb{D}$ denotes Spanier-Whitehead duality. 
The spectrum $Z$ is not connective, but it is $(1-2k)$-connective (i.e., its cells begin
in degree $1-2k$).
Then we have a class $x \in \pi_0(Z)$ representing the identity self-map of
$\Sigma^\infty \mathbb{RP}^{2k}$. We know that $x$ has order at least $2^k$
(in view of \eqref{expRP}),
but that $2x$ maps to zero under the Hurewicz map since the homology of $Z$ is a
sum of copies of $\mathbb{Z}/2$ in various degrees by the integral K\"unneth
formula and since the homology of $\mathbb{RP}^{2k}$ is annihilated by $2$. If we replace $Z$ by
$\Sigma^{2k-1} Z$, we obtain a connective spectrum together with a class (the
translate of $2x$) in
$\pi_{2k-1}$ of order at least $2^{k-1}$ which maps to zero 
under the Hurewicz map. 

At an odd prime, one carries out the analogous procedure 
using the spectra $Y_k$ used in \Cref{lowerboundp}, and \eqref{expY_k}. One
takes $k = r+1$. 
\end{proof}

\begin{remark} 
We are grateful to Peter May for pointing out the following. Choose $q \geq 0$, and 
consider the cofiber sequence
\[ C = \tau_{\geq 0} S^{-q} \to S^{-q} \to \tau_{<0} S^{-q}.  \]
Choosing $n > 0$ and $q$ appropriately, we can find an element in $\pi_n( C) =
\pi_{n+q}(S^0)$ of large exponent (e.g., using the image of the
$J$-homomorphism), larger than $\mathrm{exp}(\tau_{[1, n]}S^0)$. This element must therefore \emph{not} be annihilated by the
Hurewicz map $\pi_n(C) \to H_n(C; \mathbb{Z})$. Let the image in $H_n(C;
\mathbb{Z})$ be $x$. However, the map $H_n(C;
\mathbb{Z}) \to H_n(S^{-q}; \mathbb{Z})$ is zero, so 
$x$ must be in the image of $H_{n+1}(\tau_{<0} S^{-q}; \mathbb{Z})$. 
This gives interesting and somewhat mysterious examples of homology classes in  
degree $n$ of a \emph{coconnective} spectrum. 
\end{remark} 

\section{Applications} 

We close the paper by noting a few applications of 
considering the exponent of the spectrum itself. These are mostly formal and independent of
\Cref{maintheorem}, which however then supplies the explicit bounds. 

We begin by recovering and improving upon a result from \cite{arlettazkinv} on
$k$-invariants.
\begin{theorem} 
\label{vanishkinv}
Let $X$ be any connective spectrum. Then the $n$th $k$-invariant $\tau_{\leq
n-1} X \to \Sigma^{n+1} H \pi_n X$ is annihilated by $\mathrm{exp} ( \tau_{[1, n]}
S^0)$.
\end{theorem}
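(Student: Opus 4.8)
The plan is to realize the $n$th $k$-invariant as a map that factors through the fiber of a truncation map, and then to exhibit that fiber as something built out of $\tau_{[1,n]}S^0$ smashed with a spectrum, so that \Cref{smashexponent} applies. Concretely, recall that the $k$-invariant in question is the connecting map $\tau_{\leq n-1}X \to \Sigma^{n+1} H\pi_n X$ in the fiber sequence
\[ \tau_{\leq n} X \to \tau_{\leq n-1} X \to \Sigma^{n+1} H \pi_n X . \]
I would like to show that this map is annihilated by $\exp(\tau_{[1,n]}S^0)$. The key observation is that $\tau_{\leq n-1}X$ and $\tau_{\leq n}X$ only differ by the layer $\Sigma^n H\pi_n X$, and that the obstruction to splitting that layer off — which is exactly the $k$-invariant — is governed by the part of the sphere that sees a degree-$n$ shift, i.e.\ by $\tau_{[1,n]}S^0$.

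First I would reduce to a statement about the sphere. Write $P = \tau_{\leq n} X$ and consider the cofiber sequence $\Sigma^n H\pi_n X \to P \to \tau_{\leq n-1} X$; the $k$-invariant is the boundary $\tau_{\leq n-1}X \to \Sigma^{n+1}H\pi_n X$. Now the crucial input is that $P$ is an $n$-truncated connective spectrum, so the unit map $S^0 \to H\mathbb{Z}$ (or rather its effect after smashing) tells us how $P$ is glued from its Eilenberg--MacLane pieces. More precisely, smashing the cofiber sequence $\tau_{[1,\infty]}S^0 \to S^0 \to H\mathbb{Z}$ with an appropriate module and using that $\tau_{[1,\infty]}S^0 \wedge (\text{something $n$-truncated})$ only depends on $\tau_{[1,n]}S^0$ (as in the proof of \Cref{Hurconnect}), one identifies the relevant obstruction group as a subquotient of $\pi_*$ of something of the form $\tau_{[1,n]}S^0 \wedge Z$, hence annihilated by $\exp(\tau_{[1,n]}S^0)$ via \Cref{cohomfunctor} and \Cref{smashexponent}.

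More cleanly, I expect the cleanest route is: the $k$-invariant factors as $\tau_{\leq n-1}X \to \Sigma^{n+1} H\pi_n X$, and I claim this map becomes null after smashing $\tau_{\leq n-1}X$ with the map $\mathrm{id}$ multiplied by $\exp(\tau_{[1,n]}S^0)$. To see this, lift the problem along $S^0 \to H\mathbb{Z}$: the composite $\tau_{\leq n-1}X \to \Sigma^{n+1}H\pi_n X$ is detected in $\mathrm{Hom}$ out of $\tau_{\leq n-1}X$, and the relevant $\mathrm{Hom}$-group sits in an exact sequence whose ``torsion-carrying'' term is $[\tau_{\leq n-1}X, \Sigma^{n+1}H\pi_n X]$ computed via the $H\mathbb{Z}$-based resolution — the error between the $\pi_*$-computation and the $H_*$-computation of this mapping group is exactly a $\tau_{[1,n]}S^0$-module. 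Since the map vanishes after applying $H\mathbb{Z} \wedge -$ (the $k$-invariant is zero on homology, as $\tau_{\leq n}X$ and the layer both contribute), it lives in that error term, hence is killed by $\exp(\tau_{[1,n]}S^0)$.

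The main obstacle I anticipate is making precise the claim that the ``difference'' between the homotopy-level and homology-level computations of $[\tau_{\leq n-1}X, \Sigma^{n+1} H\pi_n X]$ is controlled by $\tau_{[1,n]}S^0$ rather than by all of $\tau_{[1,\infty]}S^0$ — i.e.\ getting the degree bookkeeping exactly right so that only homotopy of the sphere in the range $[1,n]$ enters. This is the same mechanism as in \Cref{Hurconnect}(a), where $\pi_n(Y \wedge \tau_{[1,\infty]}S^0) \simeq \pi_n(Y\wedge\tau_{[1,n]}S^0)$; here the connectivity of $\tau_{\leq n-1}X$ and the target $\Sigma^{n+1}H\pi_n X$ should pin the relevant mapping-spectrum computation into exactly that window, after which \Cref{smashexponent} finishes it. Everything else — the identification of the $k$-invariant with the boundary map, the vanishing on homology — is formal.
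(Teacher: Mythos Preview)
Your proposal circles around the right territory but never lands, and the key claim you rely on is not justified. You assert that ``the $k$-invariant is zero on homology,'' i.e.\ that $H\mathbb{Z}\wedge(\tau_{\leq n-1}X \to \Sigma^{n+1}H\pi_nX)$ is null, and you want to use this to push the class into an ``error term'' controlled by $\tau_{[1,n]}S^0$. But there is no reason for this vanishing: for a two-stage Postnikov system with $k$-invariant a nontrivial stable cohomology operation (say a Steenrod operation between Eilenberg--MacLane spectra), smashing with $H\mathbb{Z}$ certainly does not kill the map. So the step where you say ``it lives in that error term'' is a genuine gap. The surrounding discussion about factoring through $\tau_{[1,n]}S^0\wedge Z$ and invoking \Cref{smashexponent} is too vague to constitute an argument, and the degree bookkeeping you flag as the ``main obstacle'' is in fact never carried out.

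The paper's proof is far more direct, and in a sense more brutal: rather than analyze the particular $k$-invariant, it bounds the \emph{entire} group in which it lives. The $k$-invariant is an element of $H^{n+1}(\tau_{\leq n-1}X;\pi_nX)$. By the universal coefficient theorem this group is built from $H_n(\tau_{\leq n-1}X;\mathbb{Z})$ and $H_{n+1}(\tau_{\leq n-1}X;\mathbb{Z})$. But $\tau_{\leq n-1}X$ has \emph{no} homotopy in degrees $n$ and $n+1$, so the Hurewicz map is zero there, and hence these homology groups coincide with the Hurewicz cokernel. Now \Cref{Hurconnect}(b) applies directly and annihilates both by $\exp(\tau_{[1,n]}S^0)$. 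No factorization, no claim about the $k$-invariant vanishing on homology, and no delicate connectivity window is needed --- the whole cohomology group is already torsion of the right exponent.
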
 
\begin{proof} 
It suffices to show that $H^{n+1}( \tau_{\leq n-1} X; \pi_n X)$ is
annihilated by 
$\mathrm{exp} ( \tau_{[1, n]}
S^0)$. By the universal coefficient theorem (and the fact that the universal
coefficient exact sequence splits), it suffices to show that 
the two abelian groups $H_n(\tau_{\leq n-1} X; \mathbb{Z})$ and 
$H_{n+1}(\tau_{\leq n-1} X; \mathbb{Z})$ are each annihilated by $\mathrm{exp} ( \tau_{[1, n]}
S^0)$. 
This follows from the cokernel part of \Cref{Hurconnect} 
because $\tau_{\leq n-1} X$ has 
no homotopy in degrees $n$ or $n+1$. 
\end{proof}

\begin{corollary} 
\label{ourboundkinv}
If $X $ is a connective spectrum, then the $n$th
$k$-invariant of $X$ has $p$-exponent at most (for $p=2$) $\ceil{\frac{n}{2}} +
3$ or (for $p> 2$) $\ceil{\frac{n+3}{2p-2}} +1$. 
\end{corollary} 
Asymptotically, 
\Cref{ourboundkinv} is stronger than the results of \cite{arlettazkinv}, which give
$p$-exponent $n  - C_p$ for $C_p$ a constant depending on $p$, as $n \to
\infty$.

Next, we consider a question about the homology of infinite loop \emph{spaces.}

\begin{theorem} 
\label{hurmapifl}
Let $X$ be an $(m-1)$-connected infinite loop space. Then the kernel of the
(unstable) Hurewicz map 
$\pi_n(X) \to H_n(X; \mathbb{Z})$ is annihilated by $\mathrm{exp}(\tau_{[1, n-m]}
S^0)$. Therefore, the $p$-exponent of the kernel is at most 
(for $p=2$) $\ceil{\frac{n-m}{2}} +
3$ or (for $p> 2$) $\ceil{\frac{n-m+3}{2p-2}} +1$. 
\end{theorem}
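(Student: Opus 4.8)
The plan is to reduce this unstable statement to the stable one of \Cref{Hurconnect}, using the stabilization map on homology together with the extra connectivity of $X$ to cut the relevant Postnikov truncation down from $\tau_{[1,n]}S^0$ to $\tau_{[1,n-m]}S^0$. First I would write $X \simeq \Omega^\infty E$ for a connective spectrum $E$; since $X$ is $(m-1)$-connected we may replace $E$ by $\tau_{\geq m}E$ and so assume $E$ is $m$-connective (the map $\Omega^\infty \tau_{\geq m} E \to \Omega^\infty E$ is a weak equivalence). If $n \leq m$ there is nothing to prove: for $n<m$ one has $\pi_n(X)=0$, and for $n=m$ the classical Hurewicz theorem gives an isomorphism, consistent with $\mathrm{exp}(\tau_{[1,0]}S^0)=1$. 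So assume $n>m$.

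Next I would bring in the stabilization map. The adjunction counit gives a map of spectra $\Sigma^\infty X \to E$, hence a map $\sigma_*\colon H_n(X;\mathbb{Z}) \to H_n(E;\mathbb{Z})$ from the homology of the \emph{space} to the homology of the \emph{spectrum}, which on homotopy groups in positive degrees is the canonical identification $\pi_n(X) \cong \pi_n(E)$. A diagram chase (comparing the images of a map $S^n \to X$ and its adjoint $\mathbb{S}^n \to E$ under $H_n$) shows that the composite
\[ \pi_n(X) \xrightarrow{\ h^{\mathrm{un}}\ } H_n(X;\mathbb{Z}) \xrightarrow{\ \sigma_*\ } H_n(E;\mathbb{Z}) \]
is the \emph{stable} Hurewicz map $h^{\mathrm{st}}\colon \pi_n(E) \to H_n(E;\mathbb{Z})$. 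In particular $\ker(h^{\mathrm{un}}) \subseteq \ker(h^{\mathrm{st}})$, so it suffices to annihilate the (a priori larger) stable kernel.

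Finally I would bound $\ker(h^{\mathrm{st}})$, refining \Cref{Hurconnect}(a) by connectivity. As in the proof of that proposition, the fiber sequence $E \wedge \tau_{[1,\infty]}S^0 \to E \to E \wedge H\mathbb{Z}$ identifies $\ker(h^{\mathrm{st}})$ with the image of $\pi_n(E \wedge \tau_{[1,\infty]}S^0) \to \pi_n(E)$. The fiber of the truncation $\tau_{[1,\infty]}S^0 \to \tau_{[1,n-m]}S^0$ is $\tau_{\geq n-m+1}S^0$, and smashing this with the $m$-connective spectrum $E$ yields an $(n+1)$-connective spectrum; hence $\pi_n(E \wedge \tau_{[1,\infty]}S^0) \cong \pi_n(E \wedge \tau_{[1,n-m]}S^0)$, which is annihilated by $\mathrm{exp}(\tau_{[1,n-m]}S^0)$ by \Cref{cohomfunctor} (applied to the functor $\pi_n(E \wedge -)$) or \Cref{smashexponent}. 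Therefore $\ker(h^{\mathrm{st}})$, being a quotient of this group, is annihilated by $\mathrm{exp}(\tau_{[1,n-m]}S^0)$, and so is its subgroup $\ker(h^{\mathrm{un}})$. The explicit $p$-exponent estimates then follow by substituting the upper bounds of \Cref{maintheorem}.

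The only genuinely delicate ingredient is the compatibility $\sigma_* \circ h^{\mathrm{un}} = h^{\mathrm{st}}$ and the elementary-but-essential consequence $\ker(h^{\mathrm{un}}) \subseteq \ker(h^{\mathrm{st}})$; the rest is a connectivity count plus the argument already used for \Cref{Hurconnect}. I do not anticipate a real obstacle. It is worth noting that this method says nothing about the \emph{cokernel} of the unstable Hurewicz map (which is genuinely far larger than the stable one, since $H_*(\Omega^\infty E)$ dwarfs $H_*(E)$), which is exactly why only ``half'' of Beilinson's result is recovered here.
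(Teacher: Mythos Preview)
Your argument is correct, and it takes a genuinely different route from the paper's.  The paper proceeds via \Cref{vanishkinv}: it shows that the top $k$-invariant $\tau_{\leq n-1}Y \to \Sigma^{n+1}H\pi_nY$ of the delooping $Y$ is annihilated by $\mathrm{exp}(\tau_{[1,n-m]}S^0)$, rotates the cofiber sequence to produce a map of spectra $Y \to \Sigma^n H\pi_nY$ inducing multiplication by that exponent on $\pi_n$, and then deloops to get a \emph{space} map $\phi\colon X \to K(\pi_nX,n)$; a commutative square comparing the unstable Hurewicz maps of $X$ and $K(\pi_nX,n)$ then kills the kernel.  Your argument is more elementary: you simply observe that the unstable Hurewicz map factors through the stable one via $\Sigma^\infty X \to E$, so $\ker(h^{\mathrm{un}})\subseteq\ker(h^{\mathrm{st}})$, and then you refine \Cref{Hurconnect} using the $m$-connectivity of $E$ to replace $\tau_{[1,n]}S^0$ by $\tau_{[1,n-m]}S^0$.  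This bypasses \Cref{vanishkinv} entirely and is arguably the cleanest way to get the result.  The paper's approach, by contrast, produces the explicit map $X\to K(\pi_nX,n)$ as a byproduct, which is sometimes useful in its own right (cf.\ the reference to \cite{ArlSLn}), and it showcases the $k$-invariant bound as a unified tool.  Your closing remark about the cokernel is also well taken and explains the asymmetry in the statement.
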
 
This improves upon (and makes explicit) a result of Beilinson \cite{beilinson},
who also considers the cokernel of the map from $\pi_n(X)$ to the
\emph{primitives}
in $H_n(X; \mathbb{Z})$.
\begin{proof} 
Without loss of generality, we can assume that $X$ is $n$-truncated.
Let $Y$ be the $m$-connective spectrum that deloops $X$. 
Consider the cofiber sequence
\[   Y \to \tau_{\leq n-1} Y \to \Sigma^{n+1} H \pi_n Y. \]
By \Cref{vanishkinv}, the $k$-invariant map 
$\tau_{\leq n-1} Y \to \Sigma^{n+1}H \pi_n Y$ is annihilated by $
\mathrm{exp}(\tau_{[1, n-m]}
S^0)$. Consider the rotated cofiber sequence
\[ \Sigma^{-1} \tau_{\leq n-1}Y \to  \Sigma^n H \pi_n Y\to Y.  \]
Using the natural long exact sequence, we obtain that there exists a map 
\[ Y \to  \Sigma^n H\pi_n Y  \]
which induces multiplication by 
$\mathrm{exp}(\tau_{[1, n-m]}
S^0)$ on $\pi_n$. Compare \cite[Lem. 4]{ArlSLn} for this argument.

Delooping, we obtain a map of spaces $\phi \colon X \to K(\pi_n X, n)$ which induces
multiplication by 
$\mathrm{exp}(\tau_{[1, n-m]}
S^0)$ on $\pi_n$. Now we consider the commutative diagram
\[ \xymatrix{
\pi_n(X) \ar[d]^{\phi_*}  \ar[r] &  H_n(X; \mathbb{Z}) \ar[d]^{\phi_*} \\
\pi_n ( K(\pi_n X, n)) \ar[r]^{\simeq} &  H_n( K(\pi_n X, n); \mathbb{Z})
}.\]
Choose $x \in \pi_n(X)$ which is in the kernel of the Hurewicz map; the diagram
shows that $\phi_*(x) = 
\mathrm{exp}(\tau_{[1, n-m]}
S^0)
x = 0$, as desired.
\end{proof} 

Next, we give a more careful statement (in terms of exponents of Postnikov
sections of $S^0$) of \Cref{torsionorder}, and prove it. 
Note that this result is generally much sharper than \Cref{firstbound}. 
\begin{proposition} \label{torsionorder2}
Let $X$ be a $p$-torsion spectrum 
with homotopy groups concentrated in an interval $[a,b]$ of length $\ell =
b-a$. Suppose $p^k$ annihilates $\pi_i(X)$ for each $i$.
Then $\exp_p(X) \leq k + \exp_p(\tau_{[1, \ell]} S^0) + \exp_p(\tau_{[1,
\ell-1]} S^0) = k + \frac{\ell}{p-1} + O(1)$.
\end{proposition}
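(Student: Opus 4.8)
The plan is to induct on the length $\ell = b-a$, using a single $k$-invariant to peel off one homotopy group at a time and tracking how the exponent grows via \Cref{expcofiber}. The base case $\ell = 0$ is immediate: a spectrum with a single nonzero homotopy group $\pi_a(X)$ annihilated by $p^k$ is a shift of $H\pi_a(X)$, hence is itself annihilated by $p^k$, and indeed $\exp_p(\tau_{[1,0]}S^0) = \exp_p(\tau_{[1,-1]}S^0) = 0$ so the claimed bound reads $\exp_p(X) \le k$.

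For the inductive step, suppose $X$ has homotopy concentrated in $[a,b]$ with $b > a$. Consider the Postnikov truncation cofiber sequence
\[ \Sigma^{a-1} H\pi_a(X) \to X \to \tau_{\geq a+1} X, \]
where I have written the fiber as a shift of the Eilenberg--MacLane spectrum on the bottom homotopy group (equivalently, $\tau_{\le a} X \simeq \Sigma^a H\pi_a X$, and the fiber of $X \to \tau_{\ge a+1}X$ is its shift). The quotient $\tau_{\geq a+1}X$ has homotopy concentrated in $[a+1, b]$, an interval of length $\ell - 1$, with each homotopy group still annihilated by $p^k$; by the inductive hypothesis it is annihilated by $p^{k + \exp_p(\tau_{[1,\ell-1]}S^0) + \exp_p(\tau_{[1,\ell-2]}S^0)}$. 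The fiber $\Sigma^{a-1}H\pi_a X$ is annihilated by $p^k$. Applying \Cref{expcofiber} to this cofiber sequence would only give $\exp_p(X) \le 2k + \exp_p(\tau_{[1,\ell-1]}S^0) + \exp_p(\tau_{[1,\ell-2]}S^0)$, which is too weak — the factor of $k$ doubles at each stage.

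The fix, and the main point of the argument, is that the bottom piece should not be attached naively but through the $k$-invariant, whose order is controlled by \Cref{vanishkinv} rather than by $p^k$. Concretely, I would instead build $X$ from the top down: write $X$ as (a shift of) the fiber of the composite $k$-invariant, or more cleanly iterate the cofiber sequences $\tau_{\ge i}X \to \tau_{\ge i+1}X \to \Sigma^{i+1} H\pi_i X$ for $i$ running from $a$ to $b-1$, and at each stage use that the connecting map $\tau_{\ge i+1}X \to \Sigma^{i+1}H\pi_i X$ — which \emph{is} a $k$-invariant of a suitable shift of a truncation of $X$ — is annihilated by $\exp_p(\tau_{[1, b-i-1]}S^0)$ by \Cref{vanishkinv} (the index $b-i-1 \le \ell - 1$). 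So in the cofiber sequence $\Sigma^i H\pi_i X \to \tau_{\ge i}X \to \tau_{\ge i+1}X$, rather than saying the subobject is annihilated by $p^k$, I use that the \emph{attaching map} is annihilated by $\exp_p(\tau_{[1,b-i-1]}S^0)$, so $\tau_{\ge i}X$ sits between $\Sigma^i H\pi_i X$ and $\tau_{\ge i+1}X$ in a way where the single factor of $p^k$ (from $\pi_i X$) is only paid once, and the accumulated contribution from the $\ell$ attaching maps telescopes to $\sum_{j=0}^{\ell-1}\exp_p(\tau_{[1,j]}S^0)$. Pairing these up two at a time (consecutive attaching maps) and bounding crudely by the two largest terms $\exp_p(\tau_{[1,\ell-1]}S^0) + \exp_p(\tau_{[1,\ell-2]}S^0)$, plus a single $k$ for the one genuinely $p^k$-torsion cell that survives, yields $\exp_p(X) \le k + \exp_p(\tau_{[1,\ell-1]}S^0) + \exp_p(\tau_{[1,\ell-2]}S^0)$. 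Hmm — this gives $\ell-1$ and $\ell-2$ where the statement wants $\ell$ and $\ell-1$; the cleanest route that lands exactly on the stated bound is the telescoping estimate $\exp_p(X) \le k + \sum_{j=1}^{\ell}\exp_p(\tau_{[1,j]}S^0)$ is itself too lossy, so instead I use \Cref{Hurconnect} directly: the Hurewicz-type argument shows the two "edge" homology groups of each truncation contribute, and summing the Postnikov tower with the vanishing-line input of \Cref{maintheorem} replaces $\sum_j \exp_p(\tau_{[1,j]}S^0)$, which is $\Theta(\ell^2/(p-1))$, by the telescoped $\exp_p(\tau_{[1,\ell]}S^0) + \exp_p(\tau_{[1,\ell-1]}S^0) = \frac{\ell}{p-1} + O(1)$.

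The step I expect to be the main obstacle is precisely this telescoping: naively each of the $\ell$ stages of the Postnikov tower contributes a summand and the bound degrades to something quadratic in $\ell$, so one must argue that the contributions of the $k$-invariants at the various stages can be \emph{combined} — i.e., that there is a single map $X \to \Sigma^a H\pi_a X$ (or its dual) realizing the full composite and annihilated by $\exp_p(\tau_{[1,\ell]}S^0)$, not a product of $\ell$ such bounds. This is where one uses that $\tau_{[1,\ell]}S^0$ has small exponent as a \emph{spectrum} (the whole point of \Cref{maintheorem}), applied via the cofiber sequence $X \wedge \tau_{[1,\infty]}S^0 \to X \wedge S^0 \to X\wedge H\mathbb Z$ as in \Cref{Hurconnect}: $X \wedge H\mathbb Z$ is a wedge of $H\mathbb F_p$'s (exponent $k$... no, exponent $1$ after smashing, but $X\wedge H\mathbb Z$ has $\pi_*$ in an interval of length $\ell$ each annihilated by $p^k$, hence by \Cref{firstbound} exponent $\le k\ell$ — again too weak), so in fact the correct decomposition is $X \to X\wedge H\mathbb Z$ has fiber $X \wedge \tau_{[1,\infty]}S^0$, and since $X$ has homotopy in $[a,b]$ this fiber only sees $\tau_{[1,b-a]}S^0 = \tau_{[1,\ell]}S^0$ and $\tau_{[1,\ell-1]}S^0$ on the relevant homotopy groups; $X \wedge H\mathbb Z$ is an $H\mathbb Z$-module with homotopy annihilated by $p^k$ in an interval, but being an $H\mathbb Z$-module it is annihilated by $p^k$ (this is the key: over $H\mathbb Z$, "$p^k$ kills all homotopy" upgrades to "$p^k$ kills the module", since $H\mathbb Z/p^k$-modules are killed by $p^k$). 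Thus \Cref{expcofiber} applied to $X\wedge\tau_{[1,\infty]}S^0 \to X \to X\wedge H\mathbb Z$ gives $\exp_p(X) \le \exp_p(X\wedge H\mathbb Z) + \exp_p(X \wedge \tau_{[1,\infty]}S^0) \le k + \exp_p(\tau_{[1,\ell]}S^0)$, and iterating once more (applying the same to $X \wedge \tau_{[1,\infty]}S^0$, whose homotopy sits in an interval of length $\ell-1$) accounts for the extra $\exp_p(\tau_{[1,\ell-1]}S^0)$ term. Finally $\exp_p(\tau_{[1,\ell]}S^0)+\exp_p(\tau_{[1,\ell-1]}S^0) = \frac{\ell}{p-1}+O(1)$ is read off from \Cref{maintheorem}.
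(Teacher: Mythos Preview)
You eventually land on the correct cofiber sequence $X \wedge \tau_{\geq 1}S^0 \to X \to X \wedge H\mathbb{Z}$, which is exactly what the paper uses. But the way you bound the two outer pieces contains a real gap.

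The main error is the claim that ``$X \wedge H\mathbb{Z}$ is an $H\mathbb{Z}$-module with homotopy annihilated by $p^k$.'' The homotopy of $X \wedge H\mathbb{Z}$ is $H_*(X;\mathbb{Z})$, not $\pi_*(X)$; the hypothesis only controls the latter. Passing from ``$p^k$ annihilates $\pi_i(X)$'' to a bound on $H_i(X;\mathbb{Z})$ is precisely a Hurewicz cokernel statement, and this is where the paper spends the extra factor: by \Cref{Hurconnect} the cokernel of $\pi_i(X)\to H_i(X;\mathbb{Z})$ is annihilated by $R_2=p^{\exp_p(\tau_{[1,\ell-1]}S^0)}$, hence $H_i(X;\mathbb{Z})$ is annihilated by $R_2p^k$, and only then does the $H\mathbb{Z}$-module $\tau_{\leq \ell}(H\mathbb{Z}\wedge X)$ get exponent $R_2p^k$. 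Your ``iterating once more'' is not where the $\exp_p(\tau_{[1,\ell-1]}S^0)$ term enters; it enters right here, in bounding the homology.

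There is a second issue on the fiber side. You write $\exp_p(X\wedge\tau_{[1,\infty]}S^0)\leq \exp_p(\tau_{[1,\ell]}S^0)$ and then say $X\wedge\tau_{[1,\infty]}S^0$ has ``homotopy in an interval of length $\ell-1$.'' Neither is correct: $X\wedge\tau_{\geq 1}S^0$ is connective but not coconnective, and its exponent is not a priori bounded by that of any finite truncation of $S^0$. The paper sidesteps this by not bounding $\exp_p$ of the fiber at all. Instead it applies $\pi_0\hom(-,X)$ to the cofiber sequence and uses that $X$ is $\ell$-truncated to get $\pi_0\hom(\tau_{\geq 1}S^0\wedge X,\,X)\simeq \pi_0\hom(\tau_{[1,\ell]}S^0\wedge X,\,X)$, which \emph{is} annihilated by $R_1=p^{\exp_p(\tau_{[1,\ell]}S^0)}$ via \Cref{smashexponent}. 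Combining the two bounds in the exact sequence for $\pi_0\hom(-,X)$ gives $\exp_p(X)\leq k+\exp_p(\tau_{[1,\ell]}S^0)+\exp_p(\tau_{[1,\ell-1]}S^0)$.
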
 

The argument is completely formal except for the equality
$ \exp_p(\tau_{[1, \ell]} S^0) + \exp_p(\tau_{[1,
\ell-1]} S^0) =  \frac{\ell}{p-1} + O(1)$. This comparison is a
consequence of \Cref{maintheorem}. 
\Cref{torsionorder2} plus the estimates of \Cref{maintheorem} yield
\Cref{torsionorder}. We note that a simple calculation can make $O(1)$ explicit. 
\begin{proof}
Without loss of generality, we assume $a = 0$ so $b  = \ell$.
We consider the cofiber sequence
and diagram
\[
\tau_{[1, \infty]} S^0 \wedge X \to   X \to  H \mathbb{Z} \wedge
X   
.\]
This induces an exact sequence
\begin{equation} \label{bighalfexact} \pi_0\hom(H \mathbb{Z} \wedge X, X) \to
\pi_0 \hom(X, X) \to \pi_0\hom( \tau_{[1,
\infty]} S^0 \wedge X, X).   \end{equation}

Let $R_1 = p^{\exp_p(\tau_{[1, b]} S^0)}, R_2
=  p^{\exp_p(\tau_{[1, b-1]} S^0)}$. 
We will bound 
the exponents of the terms on either side by $R_1$ and $R_2 p^k$ to bound the exponent on the group in
the middle (which will give a torsion exponent for $X$).
Note that  since $X$ is concentrated in degrees $[0, b]$,
one has
\begin{gather}
\pi_0\hom(H \mathbb{Z} \wedge X, X) \simeq
\pi_0\hom(\tau_{\leq b}( H \mathbb{Z} \wedge X), X) \label{c1} \\ 
\pi_0\hom( \tau_{[1,
\infty]} S^0 \wedge X, X) \simeq \pi_0\hom( \tau_{[1, b]} S^0 \wedge X, X).
\label{c2}
\end{gather}

We claim first that $\tau_{\leq b}(H \mathbb{Z} \wedge X)$ is annihilated by $
R_2 p^k$. 
To see this, it suffices, since $\tau_{\leq b}(H \mathbb{Z} \wedge X)$ is a generalized
Eilenberg-MacLane spectrum, to show that its homotopy groups are each
annihilated by $R_2p^k$. 
That is, we need to show that each of the homology groups of $X$ is
annihilated by $R_2p^k$. 
For this, we consider the Hurewicz homomorphism
\[ \pi_i(X) \to H_i(X; \mathbb{Z}), \quad i \leq b.  \]
The source is annihilated by $p^k$, and \Cref{Hurconnect} implies that the cokernel is
annihilated by $R_2$. This proves that $H_i(X; \mathbb{Z})$ is annihilated by
$R_2p^k$ for each $i \in [0, b]$.
Therefore, \eqref{c1} is annihilated by $R_2 p^k$.

Next, we claim that $\tau_{[1, b]} S^0 \wedge X$ is annihilated by $R_1$. This
is evident by \Cref{smashexponent},
because $\tau_{[1, b]} S^0$ is. 
Thus, \eqref{c2} is annihilated by $R_1$.

Putting everything together, we obtain the desired torsion bounds on the ends of
\eqref{bighalfexact}, so that the middle term is annihilated by $R_1 R_2 p^k$, and we are done. 
\end{proof}

Finally, we show that our results have applications to exponent theorems in
equivariant stable homotopy theory. 
We begin by noting a useful example on the stable homotopy of classifying
spaces.
\begin{example} 
\label{piBG}
Let $G$ be  a finite group and let $\Sigma^\infty BG$ be the reduced suspension spectrum of
the classiying $BG$. Then for any $n$, the abelian group $\pi_n(\Sigma^\infty
BG)$ is annihilated by $|G| \mathrm{exp}(\tau_{[1, n]} S^0 ) $. This follows
from \Cref{Hurconnect}, since the integral homology of $BG$ is annihilated by
$|G|$. In fact, we obtain that the spectrum $\tau_{[1, n]} BG$ is annihilated by $|G|
\mathrm{exp}(\tau_{[1, n]} S^0 )$. We do not know if the growth rate of
$\mathrm{exp}( \tau_{[1, n]} BG)$ is in general comparable to this. 
\end{example}

\newcommand{\Gsp}{\mathcal{S}_G}

Let $G$ be a finite group, and consider the  
homotopy theory $\Gsp$ of genuine $G$-equivariant spectra. 
The symmetric monoidal category $\Gsp$ has a unit object, the equivariant sphere $S^0$. We will be interested
in exponents for the equivariant stable stems $\pi_{n, G}(S^0) = \pi_0 \hom_{\Gsp}(S^n,
S^0)$. 
More generally, we  will replace the target $S^0$ by a representation sphere
$S^V$, for $V$ a finite-dimensional real representation 
of $G$. 
In this case, we will write $\pi_{n, G}(S^V) = \hom_{\Gsp}(S^n, S^V)$.
For a subgroup $H \subset G$, we will write
$WH = N_G(H)/H$ for the Weyl group.
\begin{theorem} 
\label{equivexponent}
Let $V$ be a finite-dimensional $G$-representation.
Suppose $n$ is not equal to the dimension $\dim V^H$ for any subgroup $H
\subset G$. 
Then 
the abelian group $\pi_{n, G}(S^V)$ is annihilated by the least
common multiple of $\{|WH|\mathrm{exp} (\tau_{[1, n - \dim V^H]}
S^0)\}$ as $H \subset G$ ranges over all the subgroups with $\dim V^H < n$.
In particular, the $p$-exponent of $\pi_{n, G}(S^V)$ is at most 
\begin{align*} \exp_p( \pi_{n, G}(S^V)) & \leq  \max_{H \subset G, \dim V^H < n}
\left( v_p(|WH|)+   \exp_p( \tau_{[1, n - \dim V^H]} 
S^0) \right)  \\ 
& =  \max_{H, \dim V^H < n}\left( v_p(|WH|) + \frac{n - \dim V^H}{2p-2}
\right) + O(1) ,  \end{align*}
where $v_p$ denotes the $p$-adic valuation. 
\end{theorem}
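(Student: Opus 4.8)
The plan is to reduce the computation of $\pi_{n,G}(S^V)$ to ordinary stable homotopy via the isotropy-separation / geometric fixed-point filtration, and then feed each piece into \Cref{Hurconnect} (via \Cref{piBG}). Concretely, I would filter the $G$-spectrum $S^V$ — or rather control $\hom_{\Gsp}(S^n, S^V)$ — by the poset of subgroups, using the tom Dieck splitting or, more robustly, the ``isotropy filtration'' of the sphere $S^0 = S(\infty)_+$ built from spaces $E\mathcal{F}$ for families $\mathcal{F}$ of subgroups. The associated graded pieces are, up to shift, suspension spectra of the form $\Sigma^\infty (EWH \times_{\text{something}})$ smashed with fixed-point data; the key point is that each layer's contribution to $\pi_{n,G}(S^V)$, after passing to $H$-geometric fixed points $\Phi^H(S^V) = S^{V^H}$, looks like a (shifted) piece of $\pi_*(\Sigma^\infty B(WH)_+)$ or a retract thereof. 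That is exactly the situation of \Cref{piBG}: the relevant non-equivariant spectrum is built from $\Sigma^\infty BWH$ with homology annihilated by $|WH|$, so its homotopy in degree $n - \dim V^H$ is annihilated by $|WH|\,\mathrm{exp}(\tau_{[1,\,n-\dim V^H]}S^0)$.

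The first concrete step is to set up the filtration: order the conjugacy classes of subgroups of $G$ compatibly with inclusion, and build a finite filtration of the mapping spectrum $\hom_{\Gsp}(S^n,S^V)$ whose subquotients are non-equivariant spectra of the form $\hom\big(S^n, \Sigma^{\dim V^H}\, \Sigma^\infty (BWH_+) \wedge (\text{a } WH\text{-free or trivial piece})\big)$ — this is the standard consequence of the cofiber sequences $EP_+ \to S^0 \to \widetilde{EP}$ applied iteratively, combined with the Adams isomorphism / tom Dieck splitting identifying the $H$-fixed points of the $H$-free layer with $\Sigma^\infty (BWH)_+$ smashed with $(S^V)^H = S^{V^H}$. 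The hypothesis that $n \neq \dim V^H$ for every $H$ is what guarantees that the bottom-cell (degree-$\dim V^H$) class, which is \emph{not} torsion, does not contribute in degree $n$; so in degree $n$ every layer is a torsion group whose exponent we can bound. The second step is then purely a matter of invoking \Cref{piBG} (equivalently \Cref{Hurconnect} together with the fact that $H_*(BWH;\mathbb{Z})$ is annihilated by $|WH|$ in positive degrees) on each layer: the $\pi_n$ of the $H$-layer is annihilated by $|WH|\,\mathrm{exp}(\tau_{[1,\,n-\dim V^H]}S^0)$, since after the shift by $\dim V^H$ we are looking at $\pi_{n-\dim V^H}$ of a suspension spectrum of a classifying space. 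The third step assembles these: an extension of abelian groups each annihilated by the various $|WH|\,\mathrm{exp}(\tau_{[1,\,n-\dim V^H]}S^0)$ is annihilated by their least common multiple (this is the elementary fact that in a filtered abelian group the exponent divides the product, hence is annihilated by the lcm when we only want a bound; here an exact-sequence argument gives the lcm directly since each subquotient is a subquotient, not requiring the product). Finally, the $p$-local statement follows by taking $p$-adic valuations: $v_p$ of a least common multiple is the max of the $v_p$'s, $v_p(|WH|\,\mathrm{exp}(\tau_{[1,m]}S^0)) = v_p(|WH|) + \exp_p(\tau_{[1,m]}S^0)$, and the asymptotic $\exp_p(\tau_{[1,m]}S^0) = \frac{m}{2p-2} + O(1)$ is \Cref{maintheorem}.

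The main obstacle I anticipate is the bookkeeping in the first step — making the isotropy filtration genuinely identify the subquotients with shifts of $\Sigma^\infty BWH_+$ smashed with $S^{V^H}$, rather than with something more complicated. In particular one must be careful that the ``$H$-free part'' contributes via its $H$-fixed points as $(\Sigma^\infty E\widetilde{\mathcal{F}}[H] \wedge S^V)^H \simeq \Sigma^\infty (BWH)_+ \wedge S^{V^H}$ (using that $WH$ acts freely on the relevant universal space and the Adams isomorphism for the free $WH$-action), and that nothing in a layer indexed by $H$ contributes a free class in homotopical degree $\dim V^H$ other than the unit — which is precisely where the hypothesis $n \neq \dim V^H$ is used. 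Once the filtration is correctly identified, everything else is formal: \Cref{expcofiber} and \Cref{piBG} do all the work, and \Cref{maintheorem} supplies the explicit $\frac{n-\dim V^H}{2p-2} + O(1)$.
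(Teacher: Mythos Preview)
Your overall strategy is the same as the paper's, but you have overcomplicated it in a way that introduces a genuine error. The paper simply invokes the Segal--tom Dieck splitting
\[
\pi_{n,G}(S^V) \;\cong\; \bigoplus_{(H)} \pi_n\bigl((\Sigma^\infty S^{V^H})_{hWH}\bigr),
\]
which is an honest \emph{direct sum} decomposition, not merely a filtration. For a direct sum the exponent is the least common multiple of the exponents of the summands, and this is exactly where the lcm in the statement comes from. You instead set up an isotropy filtration and then claim that an iterated extension of abelian groups with exponents $e_1,\dots,e_r$ has exponent dividing $\mathrm{lcm}(e_1,\dots,e_r)$. That is false: $0\to\mathbb{Z}/2\to\mathbb{Z}/4\to\mathbb{Z}/2\to 0$ is the standard counterexample, and your parenthetical justification (``each subquotient is a subquotient, not requiring the product'') is not a valid argument. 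A filtration argument alone only gives the product $\prod e_i$, which is strictly weaker than what the theorem asserts. So you should use the tom Dieck splitting as a splitting, not retreat to the filtration.

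A second, smaller point: your identification of the $H$-summand as $\Sigma^\infty (BWH)_+ \wedge S^{V^H}$ is not correct in general, because $WH$ acts (possibly nontrivially) on $V^H$. The correct summand is the homotopy orbit spectrum $(\Sigma^\infty S^{V^H})_{hWH}$, i.e.\ a Thom spectrum over $BWH$. Fortunately this does not affect the exponent bound: the spectrum is still $(\dim V^H)$-connective, and (by the Serre spectral sequence or a twisted Thom isomorphism) its integral homology in degrees strictly above $\dim V^H$ is still annihilated by $|WH|$, so \Cref{Hurconnect} applies exactly as in \Cref{piBG}. With these two corrections --- use the splitting directly, and describe the summands as homotopy orbits --- your argument is exactly the paper's.
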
 

\begin{remark}
When $n > \dim V$, 
the least common 
multiple simplifies to $|G| \mathrm{exp}( \tau_{[1, n - \dim V]} S^0)$. 
\end{remark}

\begin{proof} 
This follows from the Segal-tom Dieck splitting \cite{tomDieck}, which implies that 
\[ \pi_{n  , G}(S^V)  = \bigoplus_{H }\pi_n \left( (\Sigma^\infty 
S^{V^H}
)
_{h W_H}
\right),   \]
where $H$ ranges over a system of conjugacy classes of subgroups of $G$. When $V$ is the trivial representation, we can apply
\Cref{piBG} to conclude.  

In general, $(\Sigma^\infty 
S^{V^H}
)
_{h W_H}$ is $\dim V^H $-connective. 
Moreover, the homology $H_*(S^{V^H}; \mathbb{Z})$ is concentrated in dimension $\dim V^H$, so that
it follows that for $n > \dim V^H$, 
$H_n( (\Sigma^\infty S^{V^H})_{h WH}; \mathbb{Z})$ is annihilated by the order
of $WH$.
For $n < \dim V^H$, there is no contribution in homotopy from  
$(\Sigma^\infty 
S^{V^H}
)
_{h W_H}$. 
Applying
\Cref{Hurconnect} and \Cref{maintheorem}, we obtain the desired exponent result. 
\end{proof} 

In equivariant stable homotopy theory, one is more generally interested in maps
$S^W \to S^V$ where $W, V$ are orthogonal representations of $G$.
Unfortunately, the method of \Cref{equivexponent} does not seem to give
anything, unless $W$ is very small relative to $V$, in which case one can
use a cell decomposition of $S^W$ and apply \Cref{equivexponent} to the
individual cells. 

\bibliographystyle{alpha}
\bibliography{torsion}
\end{document}